\numberwithin{equation}{section}
\theoremstyle{definition}
\numberwithin{equation}{section}
\newtheorem{theorem}{\bf Theorem}[section]
\newtheorem{remark}{\bf Remark}[section]
\newtheoremstyle
{remarkstyle}
{}
{11pt}
{}
{}
{\bfseries}
{:}
{     }
{\thmname{#1} \thmnumber{#2} }
\theoremstyle{remarkstyle}
\begin{document}
	\title{On a Fractional Variant of Linear Birth-Death Process}
		\author[Manisha Dhillon]{Manisha Dhillon}
	\address{M. Dhillon, Department of Mathematics,
		Indian Institute of Technology Bhilai, Durg, 491002, India.}
	\email{manishadh@iitbhilai.ac.in}
	\author[Pradeep Vishwakarma]{Pradeep Vishwakarma}
	\address{P. Vishwakarma, Department of Mathematics,
		Indian Institute of Technology Bhilai, Durg, 491002, India.}
	\email{pradeepv@iitbhilai.ac.in}
	\author[Kuldeep Kumar Kataria]{Kuldeep Kumar Kataria}
	\address{K.K. Kataria, Department of Mathematics, Indian Institute of Technology Bhilai, Durg, 491002, India.}
	 \email{kuldeepk@iitbhilai.ac.in}	
	
	\subjclass[2020]{Primary: 60J27; Secondary: 60J20}
	
	\keywords{fractional birth-death process, Prabhakar integral, regularized Hilfer-Prabhakar derivative, extinction probability, path integral}
	\date{\today}	
	\maketitle
\begin{abstract}
We introduce and study a fractional variant of the linear birth-death process, namely, the generalized fractional linear birth-death process (GFLBDP) which is defined by taking the regularized Hilfer-Prabhakar derivative in the system of differential equations that governs the state probabilities of linear birth-death process. For a particular choice of parameters, the GFLBDP reduces to the fractional linear birth-death process that involves the Caputo derivative. Its time-changed representation is obtained and utilized to derive the explicit expressions of its state probabilities. The explicit expressions for its mean and variance are derived. In a particular case, it is observed that the limiting distribution of the time changing process coincides to that of an inverse stable subordinator. A relation between the extinction probability of GFLBDP and the density of inter arrival times of a generalized fractional Poisson process is obtained. Later, we study some integrals of the GFLBDP and discuss the asymptotic distributional characteristics for a particular integral process. Also, an application of the path integral at random time to a genetic population with an upper bound is discussed.
\end{abstract}
	
\section{Introduction}
The birth-death process is a continuous-time Markov process which is used to model the growth of population over time. In this process, at any given instant, we can have at most one birth or one death and the possibility of simultaneous birth and death is negligible. In \cite{Vishwakarma2024a}, the birth-death process is generalized to the case of finitely many multiple births and deaths. 

Let $\lambda_n=n\lambda$ and $\mu_n=n\mu$ be the birth and death rates, respectively. In this case, it is referred as the linear birth-death process and we denote it by $\{\mathcal{N}(t)\}_{t\ge0}$. Its state probabilities $p(n,t)=\mathrm{Pr}\{\mathcal{N}(t)=n\}$, $n\ge0$ solve the following system of differential equations (see \cite{Feller1968}):
\begin{equation}\label{bdpeq}
	\mathrm{d}_{t}p(n,t)=-n(\lambda+\mu)p(n,t)+(n-1)\lambda p(n-1,t)+(n+1)\mu p(n+1,t),
\end{equation}
with $p(1,0)=1$,
where $\mathrm{d}_t$ denotes the derivative with respect to $t$. Its extinction probability is given by (see \cite{Bailey1964}, p. 93)
\begin{equation}\label{lbdppgf}
	p(0,t)=\begin{cases}
		\frac{\lambda t}{1+\lambda t},\ \lambda=\mu,\vspace{0.2cm}\\
		\frac{\mu-\mu e^{(\lambda-\mu)t}}{\mu-\lambda e^{(\lambda-\mu)t}},\ \lambda\ne\mu
	\end{cases}
\end{equation}
and for $n\ge1$, its state probabilities are
\begin{equation}\label{bdpsp}
	p(n,t)=\begin{cases}
		\frac{(\lambda t)^{k-1}}{(1+\lambda t)^{k+1}},\ \lambda=\mu,\vspace{0.2cm}\\
		(\lambda-\mu)^2e^{-(\lambda-\mu)t}\frac{(\lambda-\lambda e^{-(\lambda-\mu)t})^{k-1}}{(\lambda-\mu e^{-(\lambda-\mu)t})^{k+1}},\ \lambda\neq\mu.
	\end{cases}
\end{equation} 

In the past few years, many time-changed and time-fractional growth processes have been studied, for example, the fractional Poisson process (see \cite{Beghin2009}, \cite{Leskin2003}, \cite{Meerschaert2011}), the fractional pure birth process (see \cite{Orsingher2010}), \textit{etc}. An important characteristic of these fractional processes is their global memory property, which is essential for modeling many real world systems.
Orsingher and Polito \cite{Orsingher2011} introduced and studied a fractional variant of the linear birth-death process, where they replaced the integer order derivative in \eqref{bdpeq} by the Caputo fractional derivative. Recently, Kataria and Vishwakarma \cite{Kataria2024c} studied the fractional variant of the birth-death process for arbitrary birth and death rates. They obtained the closed form expressions for its state probabilities using the Adomian decomposition method. Also, the asymptotic behaviour of the distribution function of the extinction time is analyzed. The fractional variants of linear birth-death process have faster mean growth compared to its non-fractional case. Therefore, it serves as a more effective mathematical model for systems governed by a birth-death process under rapidly changing conditions. For more details on the time-changed variants of birth-death processes, we refer the reader to \cite{Kataria2024d} and \cite{Vishwakarma2024b}.

Garra \textit{et al.} \cite{Garra2014} introduced and studied a fractional generalization of the homogeneous Poisson process where the governing system of difference-differential equations of its distribution involves the regularized Hilfer-Prabhakar derivative. Recently, Khandakar \textit{et al.} \cite{Khandakar2025}
did a similar study for the generalized counting process, birth and death processes by replacing the integer order derivative involved in the governing system of differential equations of their state probabilities with a regularized Hilfer-Prabhakar derivative. 

In this paper, we define and study a generalized fractional linear birth-death process (GFLBDP) by replacing the integer order derivative in (\ref{bdpeq}) with regularized Hilfer-Prabhakar derivative. We establish a time-changed representation of GFLBDP that connects it with the linear birth-death process via a random process whose density solves a fractional Cauchy problem. We introduce some integrals of the GFLBDP and study the limiting behaviour for a particular case. It is observed that the fractional linear birth-death process studied in \cite{Orsingher2011} is a particular case of the GFLBDP. 

The GFLBDP can be used to model real-life situations that involve multiple parameters. In Section \ref{sec2}, it is shown that the GFLBDP exhibits faster mean growth compared to the homogeneous linear birth-death process with respect to one parameter, and grows more slowly with respect to another parameter. So, it is a suitable mathematical model for analyzing real-world systems where the birth-death processes are applicable.

Let $\mathcal{Y}(t)=\int_{0}^{t}\mathcal{N}(s)\,\mathrm{d}s$ be the path integral of linear birth-death process $\{\mathcal{N}(t)\}_{t \geq 0}$. Then, the bivariate process $\{(\mathcal{N}(t),\mathcal{Y}(t))\}_{t\ge0}$ naturally arises in several fields, such as biological sciences, engineering, inventory systems, \textit{etc.} For example, it can be applied to study the lethal effects of bacteria. Here, $\mathcal{N}(t)$ represents the total number of bacteria inside the host body at any time $t$, and $\mathcal{Y}(t)$ is proportional to the cumulative amount of toxin produced by it up to that time. It is empirically observed that the lethal effect of bacteria depends on the total bacterial count and the total toxin produced. Thus, a bivariate time-changed process serves as a potential model to study the effect of bacteria at random time.

The paper is organized as follows:

In Section \ref{sec2}, we give a generalization of the fractional linear birth-death process. It is introduced using a system of fractional differential equations that involves the regularized Hilfer-Prabhakar derivative. Its time-changed representation is derived where the density of time changing component solves a fractional Cauchy problem. By using its time-change representation, its extinction probability is derived for three different cases of birth and death rates, and its asymptotic behaviour is analyzed.

In Section \ref{sec3}, we obtain the explicit expressions for the state probabilities of GFLBDP in three different cases of birth and death rates. Also, we establish that in the long run of the process these state probabilities coincide with that of the fractional linear birth-death process studied in \cite{Orsingher2011}.

In the last section, we study some integrals of the GFLBDP, in particular, we consider its Prabhakar integral. We study the joint distributional properties of the linear birth-death process and its path integral at random time where the density of time changing process solves a fractional Cauchy problem. For particular choices of the  parameters, these integrals reduces to that of the linear birth-death process studied in \cite{Vishwakarma2024b}. Also, we discuss an application of the path integral of a time-changed linear birth-death process to a genetic population with an upper bound.

\section{Generalized fractional linear birth-death process}\label{sec2}
In this section, we define a fractional variant of the linear birth-death process whose state probabilities  solve a system of fractional differential equations involving regularized Hilfer-Prabhakar derivative. We call it the  generalized fractional linear birth-death process (GFLBDP) and denote it by $\{\mathcal{N}_{hp}(t)\}_{t\ge0}$. It is defined as a birth-death process whose state probabilities $p_{hp}(n,t)=\mathrm{Pr}\{\mathcal{N}_{hp}(t)=n\}$, $n\ge0$ solve the following system of differential equations:
\begin{equation}\label{eqdef}
	{}^{hp}\mathcal{D}_{\alpha, -\beta}^{\gamma,\rho}p_{hp}(n,t)=-n(\lambda+\mu)p_{hp}(n,t)+(n-1)\lambda p_{hp}(n-1,t)+(n+1)\mu p_{hp}(n+1,t),
\end{equation}
with initial condition $p_{hp}(1,0)=1$. Also, $p_{hp}(-1,t)=0$ for all $t\ge0$ and $|\rho\lceil\gamma\rceil/\gamma-j\alpha|<1$ for all $j=0,1,\dots,\lceil\gamma\rceil$ whenever $\gamma\ne0$, where $\lceil\cdot\rceil$ denotes the ceiling function. 

In (\ref{eqdef}), the operator ${}^{hp}\mathcal{D}_{\alpha, -\beta}^{\gamma,\rho}$, $0<\alpha\leq1$, $\beta>0$, $\gamma\ge0$, $0<\rho\leq1$ is the regularized Hilfer-Prabhakar derivative defined as follows (see \cite{Garra2014}):
\begin{equation}\label{fder}
	{}^{hp}\mathcal{D}_{\alpha, -\beta}^{\gamma,\rho}f(t)=(\textbf{E}_{\alpha,1-\rho,\beta}^{-\gamma}\mathrm{d}_xf(x))(t),
\end{equation}
for all absolutely continuous function $f(\cdot)$. Here, $\textbf{E}_{\alpha,\rho,\beta}^{\gamma}$ is the Prabhakar integral defined by (see \cite{Prabhakar1971})
\begin{equation}\label{fi}
	\textbf{E}_{\alpha,\rho,\beta}^{\gamma}g(t)=\int_{0}^{t}(t-s)^{\rho-1}E_{\alpha,\rho}^\gamma(\beta(t-s)^\alpha)g(s)\,\mathrm{d}s,\ \alpha>0,\ \rho>0,\ \beta\in\mathbb{R},\, \gamma\in\mathbb{R},
\end{equation}
for all integrable function $g(\cdot)$, where $E_{\alpha,\beta}^\gamma(\cdot)$ is the three parameter Mittag-Leffler function defined as (see \cite{Kilbas2006})
\begin{equation}\label{Mittag3}
	E_{\alpha,\beta}^\gamma(x)\coloneqq\sum_{k=0}^{\infty}\frac{(\gamma)_kx^k}{\Gamma(k\alpha+\beta)k!},\ x\in\mathbb{R}.
\end{equation}

For $\gamma=0$, the integral (\ref{fi}) reduces to the following Riemann-Liouville fractional integral (see \cite{Kilbas2006}):
\begin{equation*}
	I_{\rho}g(t)=\frac{1}{\Gamma(\rho)}\int_{0}^{t}(t-s)^{\rho-1}g(s)\,\mathrm{d}s.
\end{equation*}
Thus, for $\gamma=0$, the system of fractional differential equations (\ref{eqdef}) reduces to that of fractional linear birth-death process introduced and studied by Orsingher and Polito \cite{Orsingher2011}.

Next, we derive a time-changed representation of the GFLBDP. Let $\{\mathscr{Q}(t)\}_{t\ge0}$ be a random process whose density $f_{\mathscr{Q}}(x,t)=\mathrm{Pr}\{\mathscr{Q}(t)\in\mathrm{d}x\}/\mathrm{d}x$, $x\ge0$ solves the following Cauchy problem:
\begin{equation}\label{tc}
	{}^{hp}\mathcal{D}_{\alpha, -\beta}^{\gamma,\rho}f_{\mathscr{Q}}(x,t)=-\partial_xf_{\mathscr{Q}}(x,t),
\end{equation}
with $f_{\mathscr{Q}}(x,0)=\delta(x)$, where $\delta$ is the Dirac delta function.
The double Laplace transform of $f_{\mathscr{Q}}(x,t)$ is given by (see \cite{Garra2014}, Eq. (67))
\begin{equation}\label{tc2lap}
	\int_{0}^{\infty}\int_{0}^{\infty}e^{-zx-wt}f_{\mathscr{Q}}(x,t)\,\mathrm{d}x\,\mathrm{d}t=\frac{w^{\rho-1}(1+\beta w^{-\alpha})^\gamma}{w^{\rho}(1+\beta w^{-\alpha})^\gamma+z},\ w>0,\ z>0.
\end{equation}
So, its Laplace transform with respect to time variable is 
\begin{equation}\label{tclap}
	\int_{0}^{\infty}e^{-wt}f_{\mathscr{Q}}(x,t)\,\mathrm{d}t=w^{\rho-1}(1+\beta w^{-\alpha})^\gamma\exp(-w^{\rho}(1+\beta w^{-\alpha})^\gamma x),\ x\ge0,\ w>0.
\end{equation}

Now, we recall the definition of a stable subordinator and its inverse. A non-decreasing L\'evy process $\{L_\nu(t)\}_{t\ge0}$, $0<\nu<1$ is called the $\nu$-stable subordinator if its Laplace transform is $\mathbb{E}e^{-zL_\nu(t)}=e^{-tz^\nu}$, $z>0$. Its first passage time process $\{\mathcal{L}_\nu(t)\}_{t\ge0}$ given by $\mathcal{L}_\nu(t)\coloneqq\inf\{u>0:L_\nu(u)>t\}$, $t\ge0$ is called the inverse $\nu$-stable subordinator. It has the following Laplace transform (see \cite{Meerschaert2011}):
\begin{equation}\label{insublap}
	\mathbb{E}e^{-z\mathcal{L}_\nu(t)}=E_{\nu,1}(-zt^\nu),\ z>0.
\end{equation}

Let $\{L_{{\gamma}/{\lceil\gamma\rceil}}(t)\}_{t\ge0}$ be a ${\gamma}/{\lceil\gamma\rceil}$-stable subordinator and $Y_j(t)\coloneqq\binom{\lceil\gamma\rceil}{j}L_{{\gamma}/{\lceil\gamma\rceil}}(t)$, $j\ge0$. Let us consider the following time-changed random process:
\begin{equation}\label{tcdef}
	\mathscr{B}(t)\coloneqq\sum_{j=0}^{\lceil\gamma\rceil}L_{\rho\frac{\lceil\gamma\rceil}{\gamma}-j\alpha}(Y_j(t)),\ t\ge0,
\end{equation} 
with $0<\rho{\lceil\gamma\rceil}/{\gamma}-j\alpha<1$ for all $j=0,1,\dots,\lceil\gamma\rceil$. In (\ref{tcdef}), it is assumed that all the component processes are independent of each other. The first passage time process of $\{\mathscr{B}(t)\}_{t\ge0}$  is equal in distribution to  $\{\mathscr{Q}(t)\}_{t\ge0}$ (see \cite{Garra2014}), that is,
\begin{equation*}
	\mathscr{Q}(t)\overset{d}{=}\inf\{u\ge0:\mathscr{B}(u)>t\},\ t\ge0,
\end{equation*}
where $\overset{d}{=}$ denotes equality in distribution.

\begin{remark}\label{asymprem}
		If $|zw^{-\rho}(1+\beta w^{-\alpha})^{-\gamma}|<1$, $w>0$ then (\ref{tc2lap}) can be rewritten as follows:
		\begin{align*}
			\int_{0}^{\infty}\int_{0}^{\infty}e^{-zx-wt}f_{\mathscr{Q}}(x,t)\,\mathrm{d}x\,\mathrm{d}t&=\frac{1}{w}\bigg(1+\frac{z}{w^{\rho}(1+\beta w^{-\alpha})^\gamma}\bigg)^{-1}\\
			&=\sum_{k=0}^{\infty}(-z)^k{w^{-k\rho-1}(1+\beta w^{-\alpha})^{-k\gamma}}.
		\end{align*}
		On using the following result (see \cite{Kilbas2006}):
		\begin{equation}\label{mllap}
			\int_{0}^{\infty}e^{-wt}t^{\beta-1}E_{\alpha,\beta}^{\gamma}(ct)\,\mathrm{d}t=\frac{w^{\alpha\gamma-\beta}}{(w^{\alpha}-c)^\gamma},\ |cw^{-\alpha}|<1,
		\end{equation}
		we get
		\begin{equation*}
			\int_{0}^{\infty}e^{-zx}f_{\mathscr{Q}}(x,t)\,\mathrm{d}x=\sum_{k=0}^{\infty}(-zt^\rho)^kE_{\alpha,k\rho+1}^{k\gamma}(-\beta t^\alpha).
		\end{equation*}
		Now, we recall that for large $t$, we have the following limiting result (see \cite{Beghin2012}, Eq. (2.44)):
		\begin{equation}\label{mittaglim}
			E_{\alpha,\beta}^{\gamma}(-ct^\alpha)\sim\frac{(ct^{\alpha})^{-\gamma}}{\Gamma(\beta-\alpha\gamma)},\ \beta\neq\alpha\gamma.
		\end{equation}
		So, if $\rho>\alpha\gamma$ then for sufficiently large $t$, we have 
		\begin{equation}\label{tcasymlap}
			\int_{0}^{\infty}e^{-zx}f_{\mathscr{Q}}(x,t)\,\mathrm{d}x\sim\sum_{k=0}^{\infty}(-zt^\rho)^k\frac{(\beta t^\alpha)^{-k\gamma}}{\Gamma(k(\rho-\alpha\gamma)+1)}=E_{\rho-\alpha\gamma,1}(-z\beta^{-\gamma} t^{\rho-\alpha\gamma}),\ z>0.
		\end{equation}
		Thus, if $0<\rho-\alpha\gamma<1$ and $\beta=1$ then from (\ref{insublap}) and (\ref{tcasymlap}), it follows that the limiting distribution of the process $\{\mathscr{Q}(t)\}_{t\ge0}$ coincides with the distribution of an inverse stable subordinator with index $\rho-\alpha\gamma$.
\end{remark}

\begin{theorem}\label{thmtc}
	Let $\{\mathcal{N}(t)\}_{t\ge0}$ be the homogeneous birth-death process which is mutually independent of a random process $\{\mathscr{Q}(t)\}_{t\ge0}$ whose density solves (\ref{tc}). Then, the GFLBDP satisfies the following time-changed representation:
	\begin{equation}\label{tcrep}
		\mathcal{N}_{hp}(t)\overset{d}{=}\mathcal{N}(\mathscr{Q}(t)),\ t\ge0.
	\end{equation}	
\end{theorem}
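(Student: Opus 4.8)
The plan is to realize $\mathcal{N}(\mathscr{Q}(t))$ explicitly as a conditionally-defined process, compute its one-dimensional distributions, and then check that they solve the defining system \eqref{eqdef} with the prescribed initial value; uniqueness of that system then identifies the subordinated process with $\mathcal{N}_{hp}$ in law. I would work throughout in the Laplace domain, since this circumvents the delicate boundary terms at $x=0$ that a direct integration by parts of the Cauchy problem \eqref{tc} in the space variable would produce.

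First I would define, for $n\ge0$,
\[
\tilde p(n,t)\coloneqq\mathrm{Pr}\{\mathcal{N}(\mathscr{Q}(t))=n\}=\int_0^\infty p(n,x)\,f_{\mathscr{Q}}(x,t)\,\mathrm{d}x,
\]
where the second equality follows by conditioning on $\mathscr{Q}(t)=x$ and using the assumed mutual independence of $\{\mathcal{N}(t)\}_{t\ge0}$ and $\{\mathscr{Q}(t)\}_{t\ge0}$, and $p(n,\cdot)$ are the state probabilities of the homogeneous linear birth-death process solving \eqref{bdpeq}. Because $f_{\mathscr{Q}}(x,0)=\delta(x)$, I immediately obtain $\tilde p(n,0)=p(n,0)=\delta_{n,1}$, so the initial condition $\tilde p(1,0)=1$ agrees with that of \eqref{eqdef}; moreover $\sum_{n\ge0}\tilde p(n,t)=1$ since $\sum_{n}p(n,x)=1$ and $\int_0^\infty f_{\mathscr{Q}}(x,t)\,\mathrm{d}x=1$ (set $z=0$ in \eqref{tc2lap}), so $\tilde p$ is a genuine probability mass function.

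The core step is to verify that $\tilde p(n,t)$ solves \eqref{eqdef}. Writing $\Phi(w)\coloneqq w^{\rho}(1+\beta w^{-\alpha})^{\gamma}$, so that \eqref{tclap} reads $\int_0^\infty e^{-wt}f_{\mathscr{Q}}(x,t)\,\mathrm{d}t=\tfrac{\Phi(w)}{w}e^{-\Phi(w)x}$, Fubini's theorem gives
\[
\tilde P(n,w)\coloneqq\int_0^\infty e^{-wt}\tilde p(n,t)\,\mathrm{d}t=\frac{\Phi(w)}{w}\,\hat p\bigl(n,\Phi(w)\bigr),\qquad \hat p(n,s)\coloneqq\int_0^\infty e^{-sx}p(n,x)\,\mathrm{d}x.
\]
Laplace-transforming the birth-death equation \eqref{bdpeq} in time (with conjugate variable $s$) produces the recurrence
\[
s\hat p(n,s)-p(n,0)=-n(\lambda+\mu)\hat p(n,s)+(n-1)\lambda\hat p(n-1,s)+(n+1)\mu\hat p(n+1,s);
\]
evaluating it at $s=\Phi(w)$ and multiplying through by $\Phi(w)/w$ turns it into
\[
\Phi(w)\tilde P(n,w)-\tfrac{\Phi(w)}{w}p(n,0)=-n(\lambda+\mu)\tilde P(n,w)+(n-1)\lambda\tilde P(n-1,w)+(n+1)\mu\tilde P(n+1,w).
\]
Finally, computing the Laplace transform of the regularized Hilfer-Prabhakar derivative from \eqref{fder}, \eqref{fi} and \eqref{mllap} yields the symbol $\Phi(w)/w$ acting on $\mathrm{d}_tf$, i.e. $\mathcal{L}\bigl[{}^{hp}\mathcal{D}_{\alpha,-\beta}^{\gamma,\rho}\tilde p(n,\cdot)\bigr](w)=\Phi(w)\tilde P(n,w)-\tfrac{\Phi(w)}{w}\tilde p(n,0)$. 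Since $\tilde p(n,0)=p(n,0)$, the last two displays coincide, which is precisely the Laplace transform of \eqref{eqdef}.

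To close the argument I would invoke uniqueness: $p_{hp}$ is by definition the solution of \eqref{eqdef} subject to $p_{hp}(1,0)=1$, and, by the previous step, $\tilde p$ solves the same system with the same initial datum, so $\tilde p$ and $p_{hp}$ share the same Laplace transform in $t$ for every $n$, and Laplace inversion gives $\tilde p(n,t)=p_{hp}(n,t)$ for all $n$ and $t$, which is \eqref{tcrep}. I expect the main obstacle to be making this uniqueness rigorous: one must show that the infinite coupled system \eqref{eqdef}, under the standing restriction $|\rho\lceil\gamma\rceil/\gamma-j\alpha|<1$, admits a unique probability-valued solution. The cleanest route is to pass to the probability generating function $G(u,t)=\sum_{n}\tilde p(n,t)u^n$, show it satisfies a single scalar fractional equation, and appeal to uniqueness for that equation; the remaining points—the Fubini interchange and the precise computation of the derivative's Laplace symbol—are routine consequences of \eqref{tclap} and \eqref{mllap}.
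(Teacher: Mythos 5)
Your proposal is correct and follows essentially the same route as the paper: both arguments pass to the Laplace domain, use \eqref{tclap} to express the transform of $\mathrm{Pr}\{\mathcal{N}(\mathscr{Q}(t))=n\}$ as $w^{-1}\Phi(w)\hat p(n,\Phi(w))$ with $\Phi(w)=w^{\rho}(1+\beta w^{-\alpha})^{\gamma}$, substitute $w\mapsto\Phi(w)$ in the transformed birth-death recurrence, match against the Laplace symbol \eqref{fderlap} of the regularized Hilfer-Prabhakar derivative, and conclude by uniqueness. Your closing remark that the uniqueness of the solution to the infinite system \eqref{eqdef} deserves a rigorous justification is a fair point that the paper's proof passes over with an appeal to ``uniqueness of distribution function.''
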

\begin{proof}
	For $n\ge0$, we have $\mathrm{Pr}\{\mathcal{N}(\mathscr{Q}(t))=n\}=\int_{0}^{\infty}\mathrm{Pr}\{\mathcal{N}(x)=n\}\mathrm{Pr}\{\mathscr{Q}(t)\in\mathrm{d}x\}$. On using (\ref{tclap}), its Laplace transform with respect to time variable is given by
	\begin{equation}\label{pf1}
		\int_{0}^{\infty}e^{-wt}\mathrm{Pr}\{\mathcal{N}(\mathscr{Q}(t))=n\}\,\mathrm{d}t=w^{\rho-1}(1+\beta w^{-\alpha})^\gamma\int_{0}^{\infty}p(n,x)\exp(-w^{\rho}(1+\beta w^{-\alpha})^{\gamma}x)\,\mathrm{d}x.
	\end{equation}
	By taking the Laplace transform on both sides of (\ref{bdpeq}) with respect to time variable and using (\ref{tclap}), we have
	\begin{align}
		w\int_{0}^{\infty}&e^{-wt}p(n,t)\,\mathrm{d}t-p(n,0)\nonumber\\
		&=\int_{0}^{\infty}e^{-wt}(-n(\lambda+\mu)p(n,t)+(n-1)\lambda p(n-1,t)+(n+1)\mu p(n+1,t))\,\mathrm{d}t.\label{pf2}
	\end{align}
	By replacing $w$ with $w^{\rho}(1+\beta w^{-\alpha})^{\gamma}$ in (\ref{pf2}), we get
	\begin{align}
		w^{\rho}(1+\beta w^{-\alpha})^{\gamma}\int_{0}^{\infty}&e^{-w^{\rho}(1+\beta w^{-\alpha})^{\gamma}t}p(n,t)\,\mathrm{d}t-p(n,0)\nonumber\\
		&=\int_{0}^{\infty}e^{-w^{\rho}(1+\beta w^{-\alpha})^{\gamma}t}(-n(\lambda+\mu)p(n,t)+(n-1)\lambda p(n-1,t)\nonumber\\
		&\hspace{7cm}+(n+1)\mu p(n+1,t))\,\mathrm{d}t.\label{pf3}
	\end{align}
	Now, on multiplying $w^{\rho-1}(1+\beta w^{-\alpha})^\gamma$ on both sides of (\ref{pf3}), and using (\ref{pf1}) and $\mathrm{Pr}\{\mathcal{N}(\mathscr{Q}(0))=n\}=p(n,0)$, we get
	\begin{align}
		w^{\rho}&(1+\beta w^{-\alpha})^\gamma\int_{0}^{\infty}e^{-wt}\mathrm{Pr}\{\mathcal{N}(\mathscr{Q}(t))=n\}\,\mathrm{d}t-w^{\rho-1}(1+\beta w^{-\alpha})^\gamma\mathrm{Pr}\{\mathcal{N}(\mathscr{Q}(0))=n\}\nonumber\\
		&\hspace{1.5cm}=\int_{0}^{\infty}e^{-wt}(-n(\lambda+\mu)\mathrm{Pr}\{\mathcal{N}(\mathscr{Q}(t))=n\}+(n-1)\lambda \mathrm{Pr}\{\mathcal{N}(\mathscr{Q}(t))=n-1\}\nonumber\\
		&\hspace{8.2cm}+(n+1)\mu\mathrm{Pr}\{\mathcal{N}(\mathscr{Q}(t))=n+1\})\,\mathrm{d}t.\label{pf4}
	\end{align}
	By taking the Laplace transform on both sides of (\ref{eqdef}) and using the following result (see \cite{Garra2014}):
	\begin{equation}\label{fderlap}
		\int_{0}^{\infty}e^{-wt}{}^{hp}\mathcal{D}_{\alpha, -\beta}^{\gamma,\rho}f(t)\,\mathrm{d}t=w^{\rho}(1+\beta w^{-\alpha})^\gamma\int_{0}^{\infty}e^{-wt}f(t)\,\mathrm{d}t-w^{\rho-1}(1+\beta w^{-\alpha})^\gamma f(0),
	\end{equation}
	we get
	\begin{align}
		w^{\rho}&(1+\beta w^{-\alpha})^\gamma\int_{0}^{\infty}e^{-wt}p_{hp}(n,t)\,\mathrm{d}t-w^{\rho-1}(1+\beta w^{-\alpha})^\gamma p_{hp}(n,0)\nonumber\\
		&\hspace{0.5cm}=\int_{0}^{\infty}e^{-wt}(-n(\lambda+\mu)p_{hp}(n,t)+(n-1)\lambda p_{hp}(n-1,t)+(n+1)\mu p_{hp}(n+1,t))\,\mathrm{d}t.\label{pf5}
	\end{align}
	From (\ref{pf4}) and (\ref{pf5}), and by using the uniqueness  of distribution function, we conclude that $p_{hp}(n,t)=\mathrm{Pr}\{\mathcal{N}(\mathscr{Q}(t))=n\}$, $n\ge0$. This completes the proof.
\end{proof}

From (\ref{eqdef}), the probability generating function (pgf) $\mathcal{G}_{hp}(u,t)\coloneqq\sum_{n=0}^{\infty}u^np_{hp}(n,t)$, $|u|\leq1$, $t\ge0$ of GFLBDP solves the following fractional differential equation:
\begin{equation}\label{eqpgf}
	{}^{hp}\mathcal{D}_{\alpha, -\beta}^{\gamma,\rho}\mathcal{G}_{hp}(u,t)=(\lambda u-\mu)(u-1)\mathcal{G}_{hp}(u,t),
\end{equation}
with initial condition $\mathcal{G}_{hp}(u,0)=u$.

On taking the derivative with respect to $u$ on both sides of (\ref{eqpgf}) and substituting $u=1$, we get the following governing equation for the mean $\mathbb{E}\mathcal{N}_{hp}(t)=\partial_u\mathcal{G}_{hp}(u,t)|_{u=1}$ of GFLBDP:
\begin{equation}\label{eqmean}
	{}^{hp}\mathcal{D}_{\alpha, -\beta}^{\gamma,\rho}\mathbb{E}\mathcal{N}_{hp}(t)=(\lambda -\mu)\mathbb{E}\mathcal{N}_{hp}(t),
\end{equation}
with $\mathbb{E}\mathcal{N}_{hp}(0)=1$.
On taking the Laplace transform on both sides of (\ref{eqmean}) and using (\ref{fderlap}), we get
\begin{align}
	\int_{0}^{\infty}e^{-wt}\mathbb{E}\mathcal{N}_{hp}(t)\,\mathrm{d}t&=\frac{w^{\rho-1}(1+\beta w^{-\alpha})^\gamma}{w^{\rho}(1+\beta w^{-\alpha})^\gamma-(\lambda-\mu)}\nonumber\\
	&=w^{-1}\bigg(1-\frac{\lambda-\mu}{w^\rho(1+\beta w^{-\alpha})^\gamma}\bigg)^{-1},\ \ \bigg|\frac{\lambda-\mu}{w^\rho(1+\beta w^{-\alpha})^\gamma}\bigg|<1,\nonumber\\
	&=\sum_{k=0}^{\infty}(\lambda-\mu)^kw^{-k\rho-1}(1+\beta w^{-\alpha})^{-k\gamma}.\label{mlap}
\end{align}
Its inversion yields
\begin{equation}\label{meannhp}
	\mathbb{E}\mathcal{N}_{hp}(t)=\sum_{k=0}^{\infty}(\lambda-\mu)^kt^{k\rho}E_{\alpha,k\rho+1}^{k\gamma}(-\beta t^\alpha),\ t\ge0,
\end{equation}
where $E_{\alpha,\beta}^\gamma(\cdot)$ is the three parameter Mittag-Leffler function as defined in (\ref{Mittag3}).

The variation in the mean growth of GFLBDP with time is illustrated in Figure \ref{fig1} and Figure \ref{fig2} for different values of $\rho$ and $\beta$, respectively. From Figure \ref{fig1}, it is observed that for smaller values of $\rho$, we have faster mean growth. In Figure \ref{fig2}, we observe that the mean growth is higher for smaller values of $\beta$.  However, in this case the increase in mean growth is lower than that for $\rho$.
\begin{figure}[ht]
	\includegraphics[width=10cm]{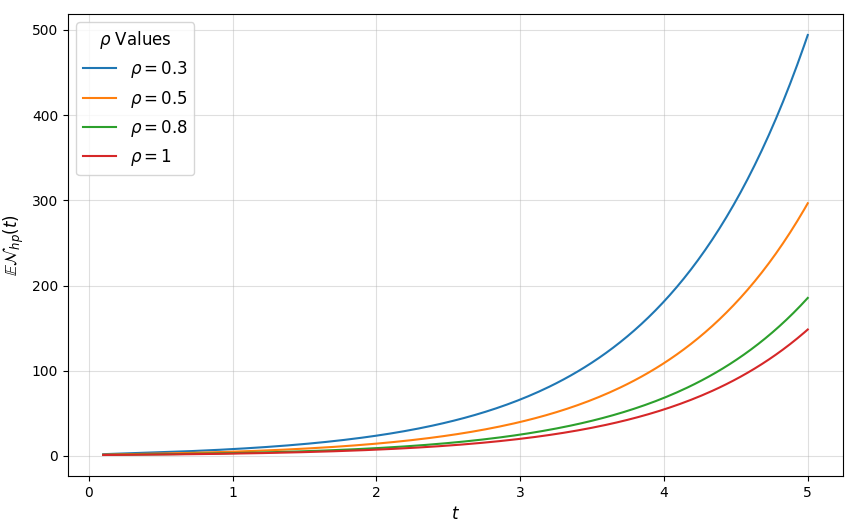}
	\caption{\small Expected value of GFLBDP versus time for different values of $\rho$ where $\lambda-\mu=1$, $\gamma=0$, $\alpha=0.5$ and $\beta=0.5$.}\label{fig1}
\end{figure}
\begin{figure}[ht]
	\includegraphics[width=10cm]{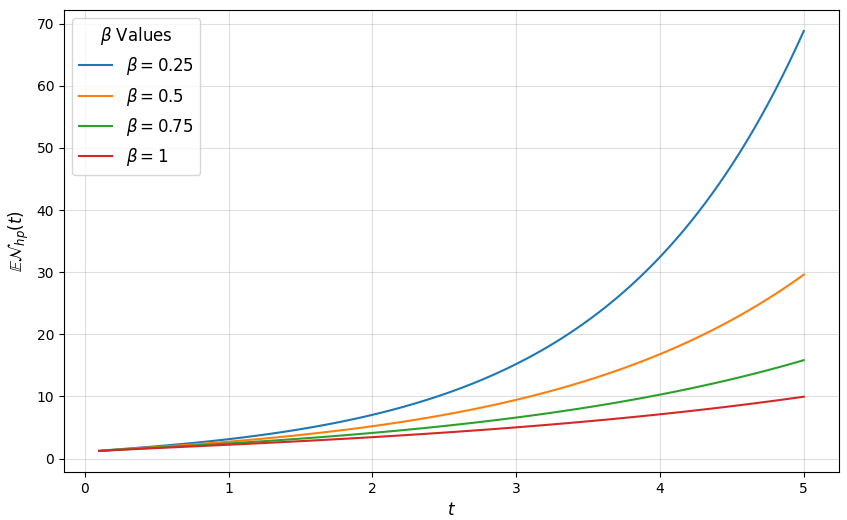}
	\caption{\small Expected value of GFLBDP versus time for different values of $\beta$ where $\lambda-\mu=1$, $\gamma=0.8$, $\alpha=0.5$ and $\rho=0.7$.}\label{fig2}
\end{figure}

Similarly, the second factorial moment $m_2(t)=\mathbb{E}\mathcal{N}_{hp}(t)(\mathcal{N}_{hp}(t)-1)=\partial_u^2\mathcal{G}_{hp}(u,t)|_{u=1}$ solves 
\begin{equation}\label{sfm}
{}^{hp}\mathcal{D}_{\alpha, -\beta}^{\gamma,\rho}m_2(t)=2\lambda\mathbb{E}\mathcal{N}_{hp}(t)+2(\lambda -\mu)m_2(t),
\end{equation}
with $m_2(0)=0$. On taking the Laplace transform on both sides of (\ref{sfm}), and using (\ref{fderlap}) and (\ref{mlap}), we have
\begin{align*}
	&\int_{0}^{\infty}e^{-wt}m_2(t)\,\mathrm{d}t\\
	&=2\lambda\sum_{k=0}^{\infty}(\lambda-\mu)^k\frac{w^{-k\rho-1}(1+\beta w^{-\alpha})^{-k\gamma}}{(w^{\rho}(1+\beta w^{-\alpha})^\gamma-2(\lambda-\mu))}\\
	&=2\lambda\sum_{k=0}^{\infty}(\lambda-\mu)^kw^{-(k+1)\rho-1}(1+\beta w^{-\alpha})^{-(k+1)\gamma}\bigg(1-\frac{2(\lambda-\mu)}{w^\rho(1+\beta w^{-\alpha})^\gamma}\bigg)^{-1},\ \bigg|\frac{2(\lambda-\mu)}{w^\rho(1+\beta w^{-\alpha})^\gamma}\bigg|<1,\\
	&=2\lambda\sum_{k=0}^{\infty}\sum_{r=0}^{\infty}2^r(\lambda-\mu)^{k+r}w^{-(k+r+1)\rho-1}(1+\beta w^{-\alpha})^{-(k+r+1)\gamma}.
\end{align*}
Its inverse Laplace transform yields
\begin{equation*}
	m_2(t)=2\lambda\sum_{k=0}^{\infty}\sum_{r=0}^{\infty}2^r(\lambda-\mu)^{k+r}t^{(k+r+1)\rho}E_{\alpha,(k+r+1)\rho+1}^{(k+r+1)\gamma}(-\beta t^\alpha),\ t\ge0.
\end{equation*}
Thus, the variance of GFLBDP is given by
$\mathbb{V}\mathrm{ar}\mathcal{N}_{hp}(t)=m_2(t)+\mathbb{E}\mathcal{N}_{hp}(t)-\mathbb{E}\mathcal{N}_{hp}(t)^2$, $t\ge0$.
\begin{remark}
	For $\gamma=0$, the GFLBDP coincides with the fractional linear birth-death process studied in \cite{Orsingher2011}. So, we have
	\begin{align*}
		\mathbb{E}\mathcal{N}_{hp}(t)|_{\gamma=0}&=\sum_{k=0}^{\infty}(\lambda-\mu)^kt^{k\rho}E_{\alpha,k\rho+1}^{0}(-\beta t^\alpha)\\
		&=\sum_{k=0}^{\infty}\frac{(\lambda-\mu)^kt^{k\rho}}{\Gamma(k\rho+1)}=E_{\rho,1}((\lambda-\mu)t^\rho),\ t\ge0,
	\end{align*}
	which coincides with Eq. (4.3) of \cite{Orsingher2011}.
	
	Also, 
	\begin{align*}
		m_2(t)|_{\gamma=0}&=2\lambda\sum_{k=0}^{\infty}\sum_{r=0}^{\infty}2^r(\lambda-\mu)^{k+r}t^{(k+r+1)\rho}E_{\alpha,(k+r+1)\rho+1}^{0}(-\beta t^\alpha)\\
		&=\frac{\lambda}{\lambda-\mu}\sum_{k=0}^{\infty}\frac{1}{2^k}\sum_{r=0}^{\infty}\frac{(2(\lambda-\mu)t^\rho)^{k+r+1}}{\Gamma((k+r+1)\rho+1)}\\
		&=\frac{\lambda}{\lambda-\mu}\sum_{k=0}^{\infty}\frac{1}{2^k}\sum_{r=k+1}^{\infty}\frac{(2(\lambda-\mu)t^\rho)^{r}}{\Gamma(r\rho+1)}\\
		&=\frac{\lambda}{\lambda-\mu}\sum_{r=1}^{\infty}\frac{(2(\lambda-\mu)t^\rho)^{r}}{\Gamma(r\rho+1)}\sum_{k=0}^{r-1}\frac{1}{2^k}\\
		&=\frac{2\lambda}{\lambda-\mu}\sum_{r=1}^{\infty}\bigg(\frac{(2(\lambda-\mu)t^\rho)^{r}}{\Gamma(r\rho+1)}-\frac{((\lambda-\mu)t^\rho)^{r}}{\Gamma(r\rho+1)}\bigg)\\
		&=\frac{2\lambda}{\lambda-\mu}\big(E_{\rho,1}(2(\lambda-\mu)t^\rho)-E_{\rho,1}((\lambda-\mu)t^\rho)\big),\ t\ge0.
	\end{align*}
	So, the variance of fractional linear birth-death process can be obtained as
	\begin{equation*}
		\mathbb{V}\mathrm{ar}\mathcal{N}_{hp}(t)|_{\gamma=0}=\frac{2\lambda}{\lambda-\mu}E_{\rho,1}(2(\lambda-\mu)t^\rho)-\frac{\lambda+\mu}{\lambda-\mu}E_{\rho,1}((\lambda-\mu)t^\rho)-\Big(E_{\rho,1}((\lambda-\mu)t^\rho)\Big)^2,\ t\ge0,
	\end{equation*}
	which agrees with Eq. (4.11) of \cite{Orsingher2011}.
	
	Further, for $\gamma=0$ and $\rho=1$, we have $\mathbb{E}\mathcal{N}_{hp}(t)|_{\gamma=0,\rho=1}=e^{(\lambda-\mu)t}$ and
	\begin{equation*}
		\mathbb{V}\mathrm{ar}\mathcal{N}_{hp}(t)|_{\gamma=0,\rho=1}=\frac{\lambda+\mu}{\lambda-\mu}e^{(\lambda-\mu)t}(e^{(\lambda-\mu)t}-1),
	\end{equation*}
	which agrees with Eq. (8.48) and Eq. (8.49) of \cite{Bailey1964}, respectively.
\end{remark}

Next,  we derive the explicit expression of the extinction probability of GFLBDP  for three different cases of birth and death rates \textit{viz} $\lambda=\mu$, $\lambda<\mu$ and $\lambda>\mu$. 
First, we consider the case of equal birth and death rate.

From (\ref{tcrep}), the state probabilities of GFLBDP can be written as follows:
\begin{equation}\label{tcpgf}
	p_{hp}(n,t)=\int_{0}^{\infty}p(n,x)\mathrm{Pr}\{\mathscr{Q}(t)\in\mathrm{d}x\},\ n\geq0.
\end{equation}
\begin{theorem}
	For $\lambda=\mu$, the extinction probability of GFLBDP  is given by
	\begin{equation}\label{gfpgf1}
		p_{hp}(0,t)=1-\sum_{k=0}^{\infty}k!(-\lambda t^\rho)^kE_{\alpha,k\rho+1}^{k\gamma}(-\beta t^{\alpha}),
	\end{equation}
	where $E_{\alpha,\beta}^\gamma(\cdot)$ is the three parameter Mittag-Leffler function as defined in (\ref{Mittag3}).
\end{theorem}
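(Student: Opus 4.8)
The plan is to exploit the time-changed representation, in the subordination form (\ref{tcpgf}), and reduce the computation of $p_{hp}(0,t)$ to the integer moments of the time-changing process $\{\mathscr{Q}(t)\}_{t\ge0}$. First I would specialize (\ref{tcpgf}) to $n=0$ and insert the extinction probability $p(0,x)=\lambda x/(1+\lambda x)$ of the linear birth-death process for $\lambda=\mu$ from (\ref{lbdppgf}), giving
\[
	p_{hp}(0,t)=\int_{0}^{\infty}\frac{\lambda x}{1+\lambda x}\,f_{\mathscr{Q}}(x,t)\,\mathrm{d}x.
\]
Writing $\lambda x/(1+\lambda x)=1-1/(1+\lambda x)=1-\sum_{k=0}^{\infty}(-\lambda x)^k$ and using that $f_{\mathscr{Q}}(\cdot,t)$ is a probability density, this becomes
\[
	p_{hp}(0,t)=1-\sum_{k=0}^{\infty}(-\lambda)^k\,\mathbb{E}\mathscr{Q}(t)^k,
\]
so the entire statement reduces to identifying the $k$-th moment of $\mathscr{Q}(t)$ with $k!\,t^{k\rho}E_{\alpha,k\rho+1}^{k\gamma}(-\beta t^\alpha)$.

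Second, I would compute $\mathbb{E}\mathscr{Q}(t)^k=\int_{0}^{\infty}x^k f_{\mathscr{Q}}(x,t)\,\mathrm{d}x$ in the Laplace domain. Transforming in $t$ and using (\ref{tclap}) gives
\[
	\int_{0}^{\infty}e^{-wt}\mathbb{E}\mathscr{Q}(t)^k\,\mathrm{d}t=w^{\rho-1}(1+\beta w^{-\alpha})^{\gamma}\int_{0}^{\infty}x^k e^{-w^{\rho}(1+\beta w^{-\alpha})^{\gamma}x}\,\mathrm{d}x.
\]
Evaluating the inner integral via $\int_{0}^{\infty}x^k e^{-\theta x}\,\mathrm{d}x=k!/\theta^{k+1}$ with $\theta=w^{\rho}(1+\beta w^{-\alpha})^{\gamma}$, the prefactor $w^{\rho-1}(1+\beta w^{-\alpha})^{\gamma}=\theta/w$ cancels one power of $\theta$ and leaves
\[
	\int_{0}^{\infty}e^{-wt}\mathbb{E}\mathscr{Q}(t)^k\,\mathrm{d}t=k!\,w^{-k\rho-1}(1+\beta w^{-\alpha})^{-k\gamma}.
\]
Inverting term-by-term with the three-parameter Mittag-Leffler transform (\ref{mllap}) -- exactly the inversion already carried out in Remark \ref{asymprem} -- yields $\mathbb{E}\mathscr{Q}(t)^k=k!\,t^{k\rho}E_{\alpha,k\rho+1}^{k\gamma}(-\beta t^\alpha)$; substituting this into the reduced expression produces the claimed series. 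A useful consistency check is that for $\gamma=0$ this gives $\mathbb{E}\mathscr{Q}(t)^k=k!\,t^{k\rho}/\Gamma(k\rho+1)$, the known moments of the inverse stable subordinator that $\{\mathscr{Q}(t)\}$ becomes in that case.

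The delicate point is the interchange of summation and integration used to pass from $1/(1+\lambda x)$ to the moment series: the geometric expansion $\sum_{k}(-\lambda x)^k$ converges only for $\lambda x<1$, whereas the integral runs over all $x\ge0$, and the resulting moment series is in general merely asymptotic (it diverges when $\rho<1$, since $\mathbb{E}\mathscr{Q}(t)^k\sim k!\,t^{k\rho}/\Gamma(k\rho+1)$ grows faster than geometrically in $k$). I would therefore read the identity at the level of Laplace transforms: the computation above shows that $w^{-1}-\sum_{k}k!(-\lambda)^k w^{-k\rho-1}(1+\beta w^{-\alpha})^{-k\gamma}$ is precisely the term-by-term transform of the asserted right-hand side, while this same series is the large-parameter asymptotic expansion of the genuinely convergent integral $\frac{\theta}{w}\int_{0}^{\infty}\frac{\lambda x}{1+\lambda x}e^{-\theta x}\,\mathrm{d}x$ obtained directly from (\ref{tclap}). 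This is the same formal convention under which the mean (\ref{meannhp}) and the expansions in Remark \ref{asymprem} are derived, and the reduction to $\lambda t/(1+\lambda t)$ at $\gamma=0,\ \rho=1$ confirms it reproduces the classical extinction probability.
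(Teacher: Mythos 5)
Your proposal is correct and follows essentially the same route as the paper: both start from the subordination formula (\ref{tcpgf}) with $p(0,x)=\lambda x/(1+\lambda x)$, both produce the Laplace-domain coefficients $k!\,w^{-k\rho-1}(1+\beta w^{-\alpha})^{-k\gamma}$, and both invert term-by-term via (\ref{mllap}). The one organizational difference is the intermediate step: the paper keeps everything in the Laplace domain, writes $1/(1+\lambda x)=\int_0^\infty e^{-(1+\lambda x)y}\,\mathrm{d}y$, evaluates the $x$-integral exactly, and only then expands $(1+\lambda y/\theta)^{-1}$ geometrically and integrates in $y$; you instead expand $1/(1+\lambda x)$ directly in $x$ and identify the coefficients as the moments $\mathbb{E}\mathscr{Q}(t)^k=k!\,t^{k\rho}E_{\alpha,k\rho+1}^{k\gamma}(-\beta t^\alpha)$, which you compute rigorously. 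The moment identity is a nice by-product not isolated in the paper. Your closing caveat is well taken and applies equally to the paper's own argument: the paper's geometric expansion requires $|\lambda y\,w^{-\rho}(1+\beta w^{-\alpha})^{-\gamma}|<1$, which fails on the tail of the $y$-integral, so its term-by-term integration is exactly as formal as your term-by-term moment expansion; in both cases the resulting series (e.g.\ $\sum_k k!(-\lambda t^\rho)^k/\Gamma(k\rho+1)$ at $\gamma=0$) has zero radius of convergence for $\rho<1$ and must be read as the asymptotic expansion of the convergent integral $1-\int_0^\infty e^{-y}E_{\rho,1}(-\lambda t^\rho y)\,\mathrm{d}y$ appearing in (\ref{fextp1}), i.e.\ in a Borel-summed sense. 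Making that reading explicit, as you do, is a genuine improvement in precision over the paper's presentation.
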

\begin{proof}
	By using (\ref{lbdppgf}) in (\ref{tcpgf}), we have
	\begin{equation}\label{pf21}
		p(0,t)=\int_{0}^{\infty}\frac{\lambda x}{1+\lambda x}\mathrm{Pr}\{\mathscr{Q}(t)\in\mathrm{d}x\}.
	\end{equation}
	On taking the Laplace transform on both sides of (\ref{pf21}) and using (\ref{tclap}), we get
	\begin{align*}
		\int_{0}^{\infty}e^{-wt}p_{hp}(0,t)\,\mathrm{d}t
		&=w^{\rho-1}(1+\beta w^{-\alpha})^\gamma\int_{0}^{\infty}\frac{\lambda x}{1+\lambda x}\exp(-w^{\rho}(1+\beta w^{-\alpha})^{\gamma}x)\,\mathrm{d}x\\
		&=w^{\rho-1}(1+\beta w^{-\alpha})^\gamma\int_{0}^{\infty}\bigg(1-\int_{0}^{\infty}e^{-(1+\lambda x)y}\,\mathrm{d}y\bigg)\exp(-w^{\rho}(1+\beta w^{-\alpha})^{\gamma}x)\,\mathrm{d}x\\
		&=w^{\rho-1}(1+\beta w^{-\alpha})^\gamma\bigg(\int_{0}^{\infty}\exp(-w^{\rho}(1+\beta w^{-\alpha})^{\gamma}x)\,\mathrm{d}x\\
		&\ \ -\int_{0}^{\infty}e^{-y}\,\mathrm{d}y\int_{0}^{\infty}\exp(-(w^{\rho}(1+\beta w^{-\alpha})^{\gamma}+\lambda y)x)\,\mathrm{d}x\bigg)\\
		&=\frac{w^{\rho-1}(1+\beta w^{-\alpha})^\gamma}{w^{\rho}(1+\beta w^{-\alpha})^{\gamma}}-\int_{0}^{\infty}\frac{w^{\rho-1}(1+\beta w^{-\alpha})^\gamma e^{-y}}{w^{\rho}(1+\beta w^{-\alpha})^{\gamma}+\lambda y}\,\mathrm{d}y\\
		&=\frac{1}{w}-\frac{1}{w}\int_{0}^{\infty}\bigg(1+\frac{\lambda y}{w^\rho(1+\beta w^{-\alpha})^\gamma}\bigg)^{-1}e^{-y}\,\mathrm{d}y\\
		&=\frac{1}{w}-\sum_{k=0}^{\infty}\int_{0}^{\infty}(-\lambda y)^kw^{-k\rho-1}(1+\beta w^{-\alpha})^{-k\gamma}e^{-y}\,\mathrm{d}y,\ \bigg|\frac{\lambda y}{w^{\rho}(1+\beta w^{-\alpha})^{\gamma}}\bigg|<1,\\
		&=\frac{1}{w}-\sum_{k=0}^{\infty}k!(-\lambda )^kw^{-k\rho-1}(1+\beta w^{-\alpha})^{-k\gamma}
	\end{align*}
and the required result follows from (\ref{mllap}).
\end{proof} 
\begin{remark}
	For $\gamma=0$ in (\ref{gfpgf1}), we get 
	\begin{align}
		p_{hp}(0,t)|_{\gamma=0}&=1-\sum_{k=0}^{\infty}k!(-\lambda t^\rho)^kE_{\alpha,k\rho+1}^{0}(-\beta t^{\alpha})\nonumber\\
		&=1-\sum_{k=0}^{\infty}\int_{0}^{\infty}y^ke^{-y}\,\mathrm{d}y\frac{(-\lambda t^\rho)^k}{\Gamma(k\rho+1)}=1-\int_{0}^{\infty}E_{\rho,1}(-\lambda t^\rho y)e^{-y}\,\mathrm{d}y,\label{fextp1}
	\end{align}
	which coincides with the extinction probability of  fractional linear birth-death process  (see \cite{Orsingher2011}, Eq. (2.26)).
	Further, for $\gamma=0$ and $\rho=1$, by using $E_{\alpha,k+1}^{0}(-\beta t^\alpha)=1/k!$, we get $p_{hp}(0,t)|_{\gamma=0,\rho=1}={\lambda t}/({\lambda t+1})$,
	which coincides with (\ref{lbdppgf}).
\end{remark}
\begin{remark}
	Equivalently, the extinction probability (\ref{gfpgf1}) can be written as
	\begin{equation}\label{extrep1}
		p_{hp}(0,t)=1-\sum_{k=0}^{\infty}k!(-\lambda t^\rho)^kE_{\alpha,k\rho+1}^{k\gamma}(-\beta t^{\alpha})=1-\sum_{k=0}^{\infty}\int_{0}^{\infty}e^{-y}(-\lambda t^\rho y)^k\,\mathrm{d}yE_{\alpha,k\rho+1}^{k\gamma}(-\beta t^{\alpha}).
	\end{equation}	
	 Garra \textit{et al.} \cite{Garra2014} studied a generalized fractional Poisson process with parameter $\lambda>0$ where the regularized Hilfer Prabhakar derivative ${}^{hp}\mathcal{D}_{\alpha, -\beta}^{\gamma,\rho}$ is used. Its inter arrival times $\mathcal{T}^\lambda$ has the following distribution: 
	\begin{equation}\label{iat}
		\mathrm{Pr}\{\mathcal{T}^\lambda>t\}=\sum_{k=0}^{\infty}(-\lambda t^\rho)^kE_{\alpha,k\rho+1}^{k\gamma}(-\beta t^\rho),\ t\ge0.
	\end{equation} 
	
	Let $\mathscr{E}$ be an exponential random variable with parameter $1$ which is independent of $\mathcal{T}^\lambda$. Then, from (\ref{extrep1}), we have
	$
		p_{hp}(0,t)=1-\mathrm{Pr}\{\mathcal{T}^{\lambda \mathscr{E}}>t\}.
$
	That is, the extinction probability of GFLBDP can be expressed in terms of the distribution function of inter arrival times of generalized fractional Poisson process with parameter $\lambda\mathscr{E}$.
\end{remark}
\begin{remark}\label{asextp1}
	From (\ref{mittaglim}), it follows that as $t\to\infty$, the extinction probability (\ref{gfpgf1}) has the following limiting approximation:
	\begin{align*}
		p_{hp}(0,t)&\sim 1-\sum_{k=0}^{\infty}k!(-\lambda t^\rho)^k\frac{(\beta t^\alpha)^{-k\gamma}}{\Gamma(k(\rho-\alpha\gamma)+1)}\\
		&=1-\sum_{k=0}^{\infty}\int_{0}^{\infty}e^{-y}y^k\,\mathrm{d}y(-\lambda t^\rho)^k\frac{(\beta t^\alpha)^{-k\gamma}}{\Gamma(k(\rho-\alpha\gamma)+1)}\\
		&=1-\int_{0}^{\infty}e^{-y}E_{\rho-\alpha\gamma,1}(-\lambda\beta^{-\gamma}y t^{\rho-\alpha\gamma})\,\mathrm{d}y.
	\end{align*}
	So, if $0<\rho-\alpha\gamma<1$ and $\beta>0$, in view of (\ref{fextp1}), we note that the asymptotic extinction probability of GFLBDP for $\lambda=\mu$ case coincides to that of the fractional birth-death process of fractional derivative order $\rho-\alpha\gamma$ and birth rate  $n\lambda\beta^{-\gamma}$. For $\beta=1$, this can be observed directly from Remark \ref{asymprem} and the time-changed representation (\ref{tcrep}).
\end{remark}

Next, we consider the case where the birth rate is lower than the death rate.
\begin{theorem}
For $\lambda<\mu$, the extinction probability of GFLBDP is given by
\begin{equation}\label{extp2}
	p_{hp}(0,t)=1-\bigg(\frac{\mu}{\lambda}-1\bigg)\sum_{k=1}^{\infty}\sum_{r=0}^{\infty}\bigg(\frac{\lambda}{\mu}\bigg)^k(-k(\mu-\lambda)t^\rho)^rE_{\alpha,r\rho+1}^{r\gamma}(-\beta t^\alpha),\ \lambda<\mu,\ t\ge0.
\end{equation}
\end{theorem}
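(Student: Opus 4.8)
The plan is to mirror the proof of the $\lambda=\mu$ case, starting from the time-changed representation \eqref{tcpgf} and reducing everything to the extinction probability $p(0,x)$ of the ordinary linear birth-death process given in \eqref{lbdppgf}. First I would note that $\{\mathscr{Q}(t)\}$ is a proper random variable: putting $z=0$ in \eqref{tc2lap} gives time-Laplace transform $1/w$, so $\int_0^\infty \mathrm{Pr}\{\mathscr{Q}(t)\in\mathrm{d}x\}=1$. This lets me rewrite
\[
p_{hp}(0,t)=1-\int_0^\infty\big(1-p(0,x)\big)\,\mathrm{Pr}\{\mathscr{Q}(t)\in\mathrm{d}x\},
\]
which shifts the work onto expanding $1-p(0,x)$ into a form matched to the space-Laplace transform of the density.

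The key algebraic step is the simplification, for $\lambda\neq\mu$,
\[
1-p(0,x)=\frac{(\mu-\lambda)e^{-(\mu-\lambda)x}}{\mu-\lambda e^{-(\mu-\lambda)x}}=\Big(\frac{\mu}{\lambda}-1\Big)\sum_{k=1}^\infty\Big(\frac{\lambda}{\mu}\Big)^k e^{-k(\mu-\lambda)x},
\]
where the geometric expansion is valid precisely because $\lambda<\mu$ forces $\tfrac{\lambda}{\mu}e^{-(\mu-\lambda)x}\le\tfrac{\lambda}{\mu}<1$ for every $x\ge0$. This is exactly where the hypothesis $\lambda<\mu$ is used, and it is the step requiring care: for $\lambda>\mu$ the factor $e^{-(\mu-\lambda)x}$ grows in $x$ and the same expansion diverges, so that case must be handled by a different rearrangement.

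Substituting the series and interchanging summation with integration, each summand becomes the space-Laplace transform of $f_{\mathscr{Q}}(\cdot,t)$ evaluated at $z=k(\mu-\lambda)$. I would then invoke the identity already established in Remark \ref{asymprem},
\[
\int_0^\infty e^{-zx}f_{\mathscr{Q}}(x,t)\,\mathrm{d}x=\sum_{r=0}^\infty(-zt^\rho)^r E_{\alpha,r\rho+1}^{r\gamma}(-\beta t^\alpha),
\]
with $z=k(\mu-\lambda)$, which delivers \eqref{extp2} directly. An equivalent route is to take the time-Laplace transform of $p_{hp}(0,t)$ through \eqref{tclap}, reduce each term to $\frac{w^{\rho-1}(1+\beta w^{-\alpha})^\gamma}{w^\rho(1+\beta w^{-\alpha})^\gamma+k(\mu-\lambda)}$, expand it in a Neumann series in $k(\mu-\lambda)$, and invert term by term using \eqref{mllap}; both routes produce the same double sum.

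The main obstacle is not any single computation but the justification of the two interchanges of limit and integral—the geometric series inside the space integral and the outer summation over $k$ against the Mittag-Leffler series—together with keeping track of the overlapping convergence windows $\tfrac{\lambda}{\mu}e^{-(\mu-\lambda)x}<1$ and $|k(\mu-\lambda)w^{-\rho}(1+\beta w^{-\alpha})^{-\gamma}|<1$. Once these are secured (by Tonelli together with absolute convergence of the geometric and Neumann series on the relevant ranges), the claimed expression \eqref{extp2} follows by matching the resulting double series index by index.
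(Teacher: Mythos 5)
Your proposal is correct and follows essentially the same route as the paper: both reduce the problem via the time-changed representation \eqref{tcpgf} to the geometric expansion of $\big(1-\tfrac{\lambda}{\mu}e^{-(\mu-\lambda)x}\big)^{-1}$, which is exactly where $\lambda<\mu$ enters, and then identify each term with a three-parameter Mittag-Leffler series. The only cosmetic difference is that you evaluate $\int_0^\infty e^{-k(\mu-\lambda)x}\,\mathrm{Pr}\{\mathscr{Q}(t)\in\mathrm{d}x\}$ directly from the series in Remark \ref{asymprem}, whereas the paper passes through the time-Laplace transform and inverts with \eqref{mllap} --- the alternative you yourself flag as equivalent.
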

\begin{proof}
	By using (\ref{lbdppgf}) in (\ref{tcpgf}), and on taking the Laplace transform, we have
	\begin{align*}
		\int_{0}^{\infty}&e^{-wt}p_{hp}(0,t)\,\mathrm{d}t\\
		&=w^{\rho-1}(1+\beta w^{-\alpha})^\gamma\int_{0}^{\infty}\frac{\mu-\mu e^{(\lambda-\mu)x}}{\mu-\lambda e^{(\lambda-\mu)x}}\exp(-w^{\rho}(1+\beta w^{-\alpha})^{\gamma}x)\,\mathrm{d}x\\
		&=w^{\rho-1}(1+\beta w^{-\alpha})^\gamma\int_{0}^{\infty}\bigg(1+e^{-(\mu-\lambda)x}\bigg)\bigg(1-\frac{\lambda}{\mu}e^{-(\mu-\lambda)x}\bigg)^{-1}\exp(-w^{\rho}(1+\beta w^{-\alpha})^{\gamma}x)\,\mathrm{d}x\\
		&=w^{\rho-1}(1+\beta w^{-\alpha})^\gamma\sum_{k=0}^{\infty}\bigg(\bigg(\frac{\lambda}{\mu}\bigg)^k\int_{0}^{\infty}\exp(-(k(\mu-\lambda)+w^\rho(1+\beta w^{-\alpha})^\gamma)x)\,\mathrm{d}x\\
		&\ \ -\frac{\mu}{\lambda}\bigg(\frac{\lambda}{\mu}\bigg)^{k+1}\int_{0}^{\infty}\exp(-((k+1)(\mu-\lambda)+w^\rho(1+\beta w^{-\alpha})^\gamma)x)\,\mathrm{d}x\bigg)\\
		&=\frac{1}{w}-\bigg(\frac{\mu}{\lambda}-1\bigg)\sum_{k=1}^{\infty}\bigg(\frac{\lambda}{\mu}\bigg)^k\frac{w^{\rho-1}(1+\beta w^{-\alpha})^\gamma}{k(\mu-\lambda)+w^\rho(1+\beta w^{-\alpha})^\gamma}\\
		&=\frac{1}{w}-\bigg(\frac{\mu}{\lambda}-1\bigg)\sum_{k=1}^{\infty}\bigg(\frac{\lambda}{\mu}\bigg)^k\frac{1}{w}\bigg(1+\frac{k(\mu-\lambda)}{w^\rho(1+\beta w^{-\alpha})^\gamma}\bigg)^{-1}\\
		&=\frac{1}{w}-\bigg(\frac{\mu}{\lambda}-1\bigg)\sum_{k=1}^{\infty}\sum_{r=0}^{\infty}\bigg(\frac{\lambda}{\mu}\bigg)^k(-k(\mu-\lambda))^rw^{-r\rho-1}(1+\beta w^{-\alpha})^{-r\gamma},
	\end{align*}
	where $|{k(\mu-\lambda)}{w^{-\rho}(1+\beta w^{-\alpha})^{-\gamma}}|<1$. Its inverse Laplace transform yields the required result on using (\ref{mllap}).
\end{proof}
\begin{remark}
	For $\gamma=0$, the extinction probability (\ref{extp2}) reduces to
	\begin{align*}
		p_{hp}(0,t)&=1-\bigg(\frac{\mu}{\lambda}-1\bigg)\sum_{k=1}^{\infty}\sum_{r=0}^{\infty}\bigg(\frac{\lambda}{\mu}\bigg)^k\frac{(-k(\mu-\lambda)t^\rho)^r}{\Gamma(r\rho+1)}\\
		&=1-\bigg(\frac{\mu}{\lambda}-1\bigg)\sum_{k=1}^{\infty}\sum_{r=0}^{\infty}\bigg(\frac{\lambda}{\mu}\bigg)^kE_{\rho,1}(-k(\mu-\lambda)t^\rho),\ t\ge0,
	\end{align*}
	which coincides with Eq. (2.20) of \cite{Orsingher2011}.
	Also, for $\gamma=0$ and $\rho=1$, it reduces to
	\begin{align*}
		p_{hp}(0,t)|_{\gamma=0,\rho=1}
		&=\frac{\mu-\mu e^{(\lambda-\mu)t}}{\mu-\lambda e^{(\lambda-\mu)t}},\ t\ge0,
	\end{align*}
	which agrees with (\ref{lbdppgf}).
\end{remark}
\begin{remark}
	Let $\mathcal{Z}$ be a geometric random variable with distribution 
	\begin{equation*}
		\mathrm{Pr}\{\mathcal{Z}=k|\mathcal{Z}=1\}=\bigg(1-\frac{\lambda}{\mu}\bigg)\bigg(\frac{\lambda}{\mu}\bigg)^{k-1},\ k\ge1,
	\end{equation*}
	where $\lambda<\mu$.
	So, the extinction probability (\ref{extp2}) has the following representation:
	\begin{align*}
		p_{hp}(0,t)&=1-\bigg(1-\frac{\lambda}{\mu}\bigg)\sum_{k=1}^{\infty}\sum_{r=0}^{\infty}\bigg(\frac{\lambda}{\mu}\bigg)^{k-1}(-k(\mu-\lambda)t^\rho)^rE_{\alpha,r\rho+1}^{r\gamma}(-\beta t^\alpha)\\
		&=1-\sum_{k=1}^{\infty}\sum_{r=0}^{\infty}\mathrm{Pr}\{\mathcal{Z}=k|\mathcal{Z}=1\}(-k(\mu-\lambda)t^\rho)^rE_{\alpha,r\rho+1}^{r\gamma}(-\beta t^\alpha),\ t\ge0.
	\end{align*}
	Thus, in the case of $\lambda<\mu$, the extinction probability of GFLBDP can be expressed in terms of the distribution function of inter arrival times of generalized fractional Poisson process with parameter $(\mu-\lambda)\mathcal{Z}$.
\end{remark}
\begin{remark}\label{asextp2}
		For large $t$, by using \eqref{mittaglim}, the extinction probability (\ref{extp2}) has the following limiting behaviour:
		\begin{equation}\label{extasymm}
			p_{hp}(0,t)\sim1-\bigg(\frac{\mu}{\lambda}-1\bigg)\sum_{k=1}^{\infty}\bigg(\frac{\lambda}{\mu}\bigg)^kE_{\rho-\alpha \gamma,1}(-k(\mu-\lambda)\beta^{-\gamma}t^{\rho-\alpha\gamma}),
		\end{equation}
		where $\rho>\alpha \gamma$. So, if $0<\rho-\alpha\gamma<1$ and $\beta>0$ then from (\ref{extp2}) and \eqref{extasymm}, it follows that the asymptotic extinction probability of GFLBDP when the birth rate is lower than the death rate coincides to that of the fractional birth-death process of fractional derivative order $\rho-\alpha\gamma$ with birth rate  $n\lambda\beta^{-\gamma}$ and death rate $n\mu\beta^{-\gamma}$, $n\ge0$.
\end{remark}

Finally, we consider the case where the birth rate is higher than the death rate.

\begin{theorem}
	For $\lambda>\mu$, the extinction probabilities of GFLBDP is 
	\begin{equation}\label{pgf3}
		p_{hp}(0,t)=\frac{\mu}{\lambda }-\bigg(1-\frac{\mu}{\lambda}\bigg)\sum_{k=1}^{\infty}\sum_{r=0}^{\infty}\bigg(\frac{\mu}{\lambda}\bigg)^k(-k(\lambda-\mu)t^\rho)^rE_{\alpha,r\rho+1}^{r\gamma}(-\beta t^\alpha),\ t\ge0.
	\end{equation}
\end{theorem}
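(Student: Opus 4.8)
The plan is to follow exactly the template used for the $\lambda<\mu$ case. Starting from the time-changed representation \eqref{tcpgf} at $n=0$, I substitute the $\lambda\ne\mu$ branch of \eqref{lbdppgf} and take the Laplace transform in $t$; by \eqref{tclap} this turns the extinction probability into
\[
	\int_{0}^{\infty}e^{-wt}p_{hp}(0,t)\,\mathrm{d}t=w^{\rho-1}(1+\beta w^{-\alpha})^\gamma\int_{0}^{\infty}\frac{\mu-\mu e^{(\lambda-\mu)x}}{\mu-\lambda e^{(\lambda-\mu)x}}\exp(-\phi(w)x)\,\mathrm{d}x,
\]
where I write $\phi(w)=w^{\rho}(1+\beta w^{-\alpha})^\gamma$ for brevity. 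The one genuine difference from the previous theorem is the sign of $\lambda-\mu$: here $e^{(\lambda-\mu)x}\to\infty$, so I first factor out the growing exponential and write
\[
	\frac{\mu-\mu e^{(\lambda-\mu)x}}{\mu-\lambda e^{(\lambda-\mu)x}}=\frac{\mu}{\lambda}\,\big(1-e^{-(\lambda-\mu)x}\big)\Big(1-\tfrac{\mu}{\lambda}e^{-(\lambda-\mu)x}\Big)^{-1},
\]
which is the convergent form analogous to the one used for $\lambda<\mu$ (with $\lambda$ and $\mu$ interchanged and an extra factor $\mu/\lambda$). Expanding the geometric series $\big(1-\tfrac{\mu}{\lambda}e^{-(\lambda-\mu)x}\big)^{-1}=\sum_{k\ge0}(\mu/\lambda)^k e^{-k(\lambda-\mu)x}$ and integrating term by term via $\int_{0}^{\infty}e^{-(k(\lambda-\mu)+\phi(w))x}\,\mathrm{d}x=1/(k(\lambda-\mu)+\phi(w))$ produces two single sums over $k$.

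Next I would reindex to merge these two sums. Shifting the index in the sum coming from the $-e^{-(\lambda-\mu)x}$ factor isolates the $k=0$ contribution $\tfrac{\mu}{\lambda}\cdot w^{\rho-1}(1+\beta w^{-\alpha})^\gamma/\phi(w)=\tfrac{\mu}{\lambda}\cdot\tfrac1w$, which inverts to the constant ultimate-extinction value $\mu/\lambda$, while the remaining terms combine into
\[
	\Big(\frac{\mu}{\lambda}-1\Big)\sum_{k=1}^{\infty}\Big(\frac{\mu}{\lambda}\Big)^{k}\frac{w^{\rho-1}(1+\beta w^{-\alpha})^\gamma}{k(\lambda-\mu)+\phi(w)}=\Big(\frac{\mu}{\lambda}-1\Big)\sum_{k=1}^{\infty}\Big(\frac{\mu}{\lambda}\Big)^{k}\frac1w\Big(1+\frac{k(\lambda-\mu)}{\phi(w)}\Big)^{-1}.
\]
Then, exactly as in the passage from \eqref{mlap} to \eqref{meannhp}, I expand each factor $\big(1+k(\lambda-\mu)\phi(w)^{-1}\big)^{-1}$ as a geometric series in $\phi(w)^{-1}$, valid for $|k(\lambda-\mu)\phi(w)^{-1}|<1$ (i.e. for $w$ large, which suffices for Laplace inversion), and invert $w^{-r\rho-1}(1+\beta w^{-\alpha})^{-r\gamma}\mapsto t^{r\rho}E_{\alpha,r\rho+1}^{r\gamma}(-\beta t^{\alpha})$ using \eqref{mllap}. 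Writing $\mu/\lambda-1=-(1-\mu/\lambda)$ and $(-k(\lambda-\mu))^{r}t^{r\rho}=(-k(\lambda-\mu)t^{\rho})^{r}$ then yields \eqref{pgf3}.

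I expect the only delicate point to be the bookkeeping of this reindexing step, since it must simultaneously extract the nonzero extinction constant $\mu/\lambda$ (which has no analogue in the $\lambda<\mu$ case, where ultimate extinction is certain and the constant was $1$) and shift the surviving series to start at $k=1$ with the correct prefactor $-(1-\mu/\lambda)$. Everything else is a routine transcription of the $\lambda<\mu$ argument, with the crucial algebraic adaptation being the factorization of $p(0,x)$ in terms of $e^{-(\lambda-\mu)x}$ rather than $e^{-(\mu-\lambda)x}$ so that the $x$-integrals converge.
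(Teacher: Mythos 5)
Your proposal is correct and follows essentially the same route as the paper's own proof: the same factorization of $p(0,x)$ in powers of $e^{-(\lambda-\mu)x}$, the same geometric-series expansion and term-by-term integration, the same reindexing that isolates the constant $\mu/\lambda$ and produces the prefactor $-(1-\mu/\lambda)$ on the $k\ge1$ series, and the same final expansion in $\phi(w)^{-1}$ followed by inversion via \eqref{mllap}. No substantive differences to report.
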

\begin{proof}
	From (\ref{lbdppgf}) and (\ref{tcpgf}), we have the following Laplace transform:
	\begin{align*}
	 \int_{0}^{\infty}&e^{-wt}p_{hp}(u,t)\,\mathrm{d}t\\
		&=w^{\rho-1}(1+\beta w^{-\alpha})^\gamma\int_{0}^{\infty}\frac{\mu-\mu e^{(\lambda-\mu)x}}{\mu-\lambda e^{(\lambda-\mu)x}}\exp(-w^{\rho}(1+\beta w^{-\alpha})^{\gamma}x)\,\mathrm{d}x\\
		&=w^{\rho-1}(1+\beta w^{-\alpha})^\gamma\frac{\mu}{\lambda}\int_{0}^{\infty}\bigg(1-e^{-(\lambda-\mu)x}\bigg)\bigg(1-\frac{\mu}{\lambda}e^{-(\lambda-\mu)x}\bigg)^{-1}\exp(-w^{\rho}(1+\beta w^{-\alpha})^{\gamma}x)\,\mathrm{d}x\\
		&=w^{\rho-1}(1+\beta w^{-\alpha})^\gamma\frac{\mu}{\lambda}\sum_{k=0}^{\infty}\bigg(\bigg(\frac{\mu}{\lambda}\bigg)^k\int_{0}^{\infty}\exp(-(k(\lambda-\mu)+w^\rho(1+\beta w^{-\alpha})^\gamma)x)\,\mathrm{d}x\\
		&\ \ -\frac{\lambda}{\mu}\bigg(\frac{\mu}{\lambda }\bigg)^{k+1}\int_{0}^{\infty}\exp(-((k+1)(\lambda-\mu)+w^\rho(1+\beta w^{-\alpha})^\gamma)x)\,\mathrm{d}x\bigg)\\
		&=\frac{\mu}{\lambda w}-\bigg(1-\frac{\mu}{\lambda}\bigg)\sum_{k=1}^{\infty}\bigg(\frac{\mu}{\lambda}\bigg)^k\frac{w^{\rho-1}(1+\beta w^{-\alpha})^\gamma}{k(\lambda-\mu)+w^\rho(1+\beta w^{-\alpha})^\gamma}\\
		&=\frac{\mu}{\lambda w}-\bigg(1-\frac{\mu}{\lambda}\bigg)\sum_{k=1}^{\infty}\bigg(\frac{\mu}{\lambda}\bigg)^k\frac{1}{w}\bigg(1+\frac{k(\lambda-\mu)}{w^\rho(1+\beta w^{-\alpha})^\gamma}\bigg)^{-1}\\
		&=\frac{\mu}{\lambda w}-\bigg(1-\frac{\mu}{\lambda}\bigg)\sum_{k=1}^{\infty}\sum_{r=0}^{\infty}\bigg(\frac{\mu}{\lambda}\bigg)^k(-k(\lambda-\mu))^rw^{-r\rho-1}(1+\beta w^{-\alpha})^{-r\gamma},
	\end{align*}
	where $|{k(\lambda-\mu)}{w^{-\rho}(1+\beta w^{-\alpha})^{-\gamma}}|<1$.
	Its inversion yields the required result.
\end{proof}
\begin{remark}
	For $\gamma=0$, the extinction probability (\ref{pgf3}) reduces to
	\begin{align*}
		p_{hp}(0,t)|_{\gamma=0}&=\frac{\mu}{\lambda }-\bigg(1-\frac{\mu}{\lambda}\bigg)\sum_{k=1}^{\infty}\sum_{r=0}^{\infty}\bigg(\frac{\mu}{\lambda}\bigg)^k\frac{(-k(\lambda-\mu)t^\rho)^r}{\Gamma(r\rho+1)}\\
		&=\frac{\mu}{\lambda }-\bigg(1-\frac{\mu}{\lambda}\bigg)\sum_{k=1}^{\infty}\sum_{r=0}^{\infty}\bigg(\frac{\mu}{\lambda}\bigg)^kE_{\rho,1}(-k(\lambda-\mu)t^\rho),
	\end{align*}
	which coincides with Eq. (2.13) of \cite{Orsingher2011}.
	For $\gamma=0$ and $\rho=1$, it reduces to
	\begin{align*}
	p_{hp}(0,t)|_{\gamma=1,\rho=0}
	&=\frac{\mu}{\lambda }-\bigg(1-\frac{\mu}{\lambda}\bigg)\sum_{k=1}^{\infty}\bigg(\frac{\mu}{\lambda}\bigg)^ke^{-k(\lambda-\mu)t}\\
	&=\frac{\mu}{\lambda}-\bigg(1-\frac{\mu}{\lambda}\bigg)\bigg(\frac{1}{1-\frac{\mu}{\lambda}e^{-(\lambda-\mu)t}}-1\bigg)=\frac{-\mu+\mu e^{-(\lambda-\mu)t}}{-\lambda+\mu e^{-(\lambda-\mu)t}},\ t\ge0,
	\end{align*}
	which agrees with (\ref{lbdppgf}).
\end{remark}
\begin{remark}
	Let $\mathscr{Z}$ be a geometric random variable with distribution
	\begin{equation*}
		\mathrm{Pr}\{\mathscr{Z}=k|\mathscr{Z}=1\}=\bigg(1-\frac{\mu}{\lambda}\bigg)\bigg(\frac{\mu}{\lambda}\bigg)^{k-1},\ k\ge1,
	\end{equation*}
	where $\lambda>\mu$.
	So, the extinction probability (\ref{pgf3}) can be written as follows:
	\begin{equation*}
		p_{hp}(0,t)=\frac{\mu}{\lambda }\bigg(1-\sum_{k=1}^{\infty}\sum_{r=0}^{\infty}\mathrm{Pr}\{\mathscr{Z}=k|\mathscr{Z}=1\}(-k(\lambda-\mu)t^\rho)^rE_{\alpha,r\rho+1}^{r\gamma}(-\beta t^\alpha)\bigg),\ t\ge0.
	\end{equation*}
	Thus, for $\lambda>\mu$, the extinction probability of GFLBDP can be expressed in terms of the distribution function of inter arrival times of generalized fractional Poisson process with parameter $(\lambda-\mu)\mathscr{Z}$.
\end{remark}
\begin{remark}\label{asextp3}
		For large $t$, by using \eqref{mittaglim}, the extinction probability (\ref{pgf3}) has the following limiting approximation:
		\begin{equation}\label{extasym}
			p_{hp}(0,t)\sim\frac{\mu}{\lambda }-\bigg(1-\frac{\mu}{\lambda}\bigg)\sum_{k=1}^{\infty}\bigg(\frac{\mu}{\lambda}\bigg)^kE_{\rho-\alpha \gamma,1}(-k(\lambda-\mu)\beta^{-\gamma}t^{\rho-\alpha\gamma}),
		\end{equation}
		where $\rho>\alpha \gamma$. If $0<\rho-\alpha\gamma<1$ and $\beta>0$ then in view of (\ref{pgf3}) and \eqref{extasym},  the asymptotic extinction probability of GFLBDP when birth rate is higher than the death rate coincides to that of the fractional birth-death process of fractional derivative order $\rho-\alpha\gamma$ with birth rate $n\lambda\beta^{-\gamma}$ and death rate $n\mu\beta^{-\gamma}$, $n\ge0$.
\end{remark}
\section{State probabilities of GFLBDP}\label{sec3}
In this section, we  derive the explicit expressions for the state probabilities of GFLBDP $\{\mathcal{N}_{hp}(t)\}_{t\ge0}$ for different cases of birth and death rates.
\begin{theorem}
	For $\lambda=\mu$, the state probabilities $p_{hp}(n,t)=\mathrm{Pr}\{\mathcal{N}_{hp}(t)=n\}$, $n\ge1$ of GFLBDP are given by
	\begin{equation}\label{sp1}
		p_{hp}(n,t)=\frac{(-\lambda)^{n-1}}{n!}\frac{\partial^n}{\partial\lambda^n}\lambda\sum_{r=0}^{\infty}r!(-\lambda t^\rho)^{r}E_{\alpha,r\rho+1}^{r\gamma}(-\beta w^{-\alpha}).
	\end{equation}
\end{theorem}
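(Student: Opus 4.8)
The plan is to start from the time-changed representation of the state probabilities, namely \eqref{tcpgf}, $p_{hp}(n,t)=\int_0^\infty p(n,x)\,\mathrm{Pr}\{\mathscr{Q}(t)\in\mathrm{d}x\}$, and to insert the explicit $\lambda=\mu$ form of the underlying linear birth-death probabilities from \eqref{bdpsp}, that is $p(n,x)=(\lambda x)^{n-1}/(1+\lambda x)^{n+1}$ for $n\ge1$. The entire argument then rests on observing that this rational function of $\lambda$ is produced by repeated $\lambda$-differentiation of the single quantity $\lambda/(1+\lambda x)$, so that the differential operator can be pulled outside the integral and the surviving integral identified with the already-computed extinction probability \eqref{gfpgf1}.

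First I would establish the pointwise-in-$x$ identity
\begin{equation*}
	p(n,x)=\frac{(\lambda x)^{n-1}}{(1+\lambda x)^{n+1}}=\frac{(-\lambda)^{n-1}}{n!}\frac{\partial^n}{\partial\lambda^n}\frac{\lambda}{1+\lambda x},\qquad n\ge1.
\end{equation*}
This is a short computation: writing $\lambda/(1+\lambda x)=1/x-(1/x)(1+\lambda x)^{-1}$, the constant term $1/x$ is annihilated by $\partial_\lambda^n$ for $n\ge1$, while $\partial_\lambda^n(1+\lambda x)^{-1}=(-1)^n n!\,x^n(1+\lambda x)^{-(n+1)}$; collecting prefactors and using the sign bookkeeping $(-1)^{n-1}(-1)^{n+1}=1$ reproduces $(\lambda x)^{n-1}/(1+\lambda x)^{n+1}$ exactly. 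This is also the structural reason the statement is restricted to $n\ge1$.

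Substituting this identity into \eqref{tcpgf} and interchanging $\partial_\lambda^n$ with the integral against the law of $\mathscr{Q}(t)$ gives
\begin{equation*}
	p_{hp}(n,t)=\frac{(-\lambda)^{n-1}}{n!}\frac{\partial^n}{\partial\lambda^n}\int_0^\infty\frac{\lambda}{1+\lambda x}\,\mathrm{Pr}\{\mathscr{Q}(t)\in\mathrm{d}x\}.
\end{equation*}
Since $\mathscr{Q}(t)$ has unit total mass and $1-\lambda x/(1+\lambda x)=1/(1+\lambda x)$, the remaining integral equals $\lambda\int_0^\infty(1+\lambda x)^{-1}\,\mathrm{Pr}\{\mathscr{Q}(t)\in\mathrm{d}x\}=\lambda\big(1-p_{hp}(0,t)\big)$. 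Inserting the series form $1-p_{hp}(0,t)=\sum_{r\ge0}r!(-\lambda t^\rho)^rE_{\alpha,r\rho+1}^{r\gamma}(-\beta t^\alpha)$ from \eqref{gfpgf1} and differentiating term by term in $\lambda$ then yields \eqref{sp1}.

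The main obstacle is purely one of justification: legitimizing the interchange of $\partial_\lambda^n$ with the integral, equivalently the term-by-term $\lambda$-differentiation of the series in \eqref{gfpgf1}. I would dispose of it exactly as the earlier extinction-probability theorems are handled, namely at the level of Laplace transforms in $t$. Applying \eqref{pf1} converts the bracketed integral into $w^{-1}\theta\int_0^\infty\big(\lambda/(1+\lambda x)\big)e^{-\theta x}\,\mathrm{d}x$ with $\theta=w^\rho(1+\beta w^{-\alpha})^\gamma$; there the $\lambda$-dependence of the $r$-th term is a genuine power $(-\lambda)^r$, so $\partial_\lambda^n$ commutes with the term-by-term inversion supplied by \eqref{mllap}, reproducing \eqref{sp1} under the same formal summation convention already adopted for \eqref{gfpgf1}.
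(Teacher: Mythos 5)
Your proposal is correct and rests on the same central device as the paper's proof: writing $(\lambda x)^{n-1}/(1+\lambda x)^{n+1}$ as a constant times $\partial_\lambda^n$ of a single rational function of $\lambda$ and pulling the operator outside the integral against the law of $\mathscr{Q}(t)$. (Your generator $\frac{(-\lambda)^{n-1}}{n!}\partial_\lambda^n\frac{\lambda}{1+\lambda x}$ and the paper's $\frac{(-1)^n\lambda^{n-1}}{n!}\partial_\lambda^n\frac{1}{x(1+\lambda x)}$ coincide, since $\frac{1}{x(1+\lambda x)}=\frac{1}{x}-\frac{\lambda}{1+\lambda x}$ and the $1/x$ term dies under $\partial_\lambda^n$.) Where you genuinely diverge is in how the surviving integral is evaluated: the paper recomputes it from scratch in the Laplace domain, splitting off a $\lambda$-independent piece whose term-by-term inversion produces formally divergent integrals $\int_0^\infty(-yt^\rho)^r\,\mathrm{d}y$ that are then silently discarded, whereas you observe that $\int_0^\infty\frac{\lambda}{1+\lambda x}\,\mathrm{Pr}\{\mathscr{Q}(t)\in\mathrm{d}x\}=\lambda(1-p_{hp}(0,t))$ and quote the already-established series \eqref{gfpgf1}. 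This buys two things: no duplication of the Laplace-transform computation, and a bounded integrand throughout, so the divergent artefacts never appear. Your closing remark about justifying the interchange at the level of Laplace transforms matches the (formal) standard of rigor used elsewhere in the paper. One small point: the argument of the Mittag-Leffler function in the theorem as printed, $E_{\alpha,r\rho+1}^{r\gamma}(-\beta w^{-\alpha})$, is a typo for $E_{\alpha,r\rho+1}^{r\gamma}(-\beta t^{\alpha})$; your derivation correctly produces the latter.
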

\begin{proof}
	On using (\ref{bdpsp}) and (\ref{tcrep}), we have
	\begin{equation*}
		p_{hp}(n,t)=\int_{0}^{\infty}\frac{(\lambda x)^{k-1}}{(1+\lambda x)^{k+1}}\mathrm{Pr}\{\mathscr{Q}(t)\in\mathrm{d}x\},\ n\ge1.
	\end{equation*}
	On using (\ref{tclap}), its Laplace transform is  given by
	{\small\begin{align*}
	&\int_{0}^{\infty}e^{-wt}p_{hp}(n,t)\,\mathrm{d}t\\
	&=w^{\rho-1}(1+\beta w^{-\alpha})^\gamma\int_{0}^{\infty}\frac{(\lambda x)^{n-1}}{(1+\lambda x)^{n+1}}\exp(-w^{\rho}(1+\beta w^{-\alpha})^\gamma x)\,\mathrm{d}x\\
	&=w^{\rho-1}(1+\beta w^{-\alpha})^\gamma\frac{(-1)^n\lambda^{n-1}}{n!}\frac{\partial^n}{\partial\lambda^n}\int_{0}^{\infty}\frac{1}{x(1+\lambda x)}\exp(-w^{\rho}(1+\beta w^{-\alpha})^\gamma x)\,\mathrm{d}x\\
	&=w^{\rho-1}(1+\beta w^{-\alpha})^\gamma\frac{(-1)^n\lambda^{n-1}}{n!}\frac{\partial^n}{\partial\lambda^n}\int_{0}^{\infty}\bigg(\frac{1}{x}-\frac{\lambda}{1+\lambda x}\bigg)\exp(-w^{\rho}(1+\beta w^{-\alpha})^\gamma x)\,\mathrm{d}x\\
	&=w^{\rho-1}(1+\beta w^{-\alpha})^\gamma\frac{(-1)^n\lambda^{n-1}}{n!}\frac{\partial^n}{\partial\lambda^n}\int_{0}^{\infty}\int_{0}^{\infty}(e^{-xy}-\lambda e^{-(1+\lambda x)y})\,\mathrm{d}y\exp(-w^{\rho}(1+\beta w^{-\alpha})^\gamma x)\,\mathrm{d}x\\
	&=w^{\rho-1}(1+\beta w^{-\alpha})^\gamma\frac{(-1)^n\lambda^{n-1}}{n!}\frac{\partial^n}{\partial\lambda^n}\bigg(\int_{0}^{\infty}\frac{\,\mathrm{d}y}{w^{\rho}(1+\beta w^{-\alpha})^\gamma+y}-\int_{0}^{\infty}\frac{\lambda e^{-y}\,\mathrm{d}y}{w^{\rho}(1+\beta w^{-\alpha})^\gamma+\lambda y}\bigg)\\
	&=\frac{(-1)^n\lambda^{n-1}}{n!}\frac{\partial^n}{\partial\lambda^n}\frac{1}{w}\bigg(\int_{0}^{\infty}\bigg(1+\frac{y}{w^{\rho}(1+\beta w^{-\alpha})^\gamma}\bigg)^{-1}\,\mathrm{d}y-\int_{0}^{\infty}\lambda e^{-y}\bigg(1+\frac{\lambda y}{w^{\rho}(1+\beta w^{-\alpha})^\gamma}\bigg)^{-1}\,\mathrm{d}y\bigg)\\
	&=\frac{(-1)^n\lambda^{n-1}}{n!}\frac{\partial^n}{\partial\lambda^n}\sum_{r=0}^{\infty}\bigg(\int_{0}^{\infty}(-y)^rw^{-r\rho-1}(1+\beta w^{-\alpha})^{-r\gamma}\,\mathrm{d}y-\lambda\int_{0}^{\infty}e^{-y}(-\lambda y)^rw^{-r\rho-1}(1+\beta w^{-\alpha})^{-r\gamma}\,\mathrm{d}y\bigg),
	\end{align*}}
	where $|\lambda y w^{-\rho}(1+\beta w^{-\alpha})^{-\gamma})|<1$. On taking its inverse Laplace transform and using (\ref{mllap}), we get
	{\small\begin{align*}
		p_{hp}(n,t)&=\frac{(-1)^n\lambda^{n-1}}{n!}\frac{\partial^n}{\partial\lambda^n}\sum_{r=0}^{\infty}\bigg(\int_{0}^{\infty}(-yt^\rho)^rE_{\alpha,r\rho+1}^{r\gamma}(-\beta t^\alpha)\,\mathrm{d}y-\lambda\int_{0}^{\infty}e^{-y}(-\lambda y)^{r}E_{\alpha,r\rho+1}^{r\gamma}(-\beta w^{-\alpha})\,\mathrm{d}y\bigg)\\
		&=\frac{(-1)^n\lambda^{n-1}}{n!}\frac{\partial^n}{\partial\lambda^n}\sum_{r=0}^{\infty}\bigg(-\lambda\int_{0}^{\infty}e^{-y}(-\lambda y)^{r}E_{\alpha,r\rho+1}^{r\gamma}(-\beta w^{-\alpha})\,\mathrm{d}y\bigg)\\
		&=\frac{(-1)^n\lambda^{n-1}}{n!}\frac{\partial^n}{\partial\lambda^n}\sum_{r=0}^{\infty}r!(-\lambda)^{r+1}t^{r\rho}E_{\alpha,r\rho+1}^{r\gamma}(-\beta w^{-\alpha}).
	\end{align*}}
This completes the proof.
\end{proof}
\begin{remark}
	 For $\gamma=0$, the state probabilities (\ref{sp1}) reduce to
	\begin{align*}
		p_{hp}(n,t)|_{\gamma=0}&=\frac{(-\lambda)^{n-1}}{n!}\frac{\partial^n}{\partial\lambda^n}\lambda\sum_{r=0}^{\infty}r!\frac{(-\lambda t^\rho)^{r}}{\Gamma(r\rho+1)}\\
		&=\frac{(-\lambda)^{n-1}}{n!}\frac{\partial^n}{\partial\lambda^n}\lambda\sum_{r=0}^{\infty}\int_{0}^{\infty}e^{-y}y^r\,\mathrm{d}y\frac{(-\lambda t^\rho)^{r}}{\Gamma(r\rho+1)}\\
		&=\frac{(-\lambda)^{n-1}}{n!}\frac{\partial^n}{\partial\lambda^n}\lambda\int_{0}^{\infty}e^{-y}E_{\rho,1}(-\lambda yt^\rho)\,\mathrm{d}y,\ n\ge1,\ t\ge0,
	\end{align*}
	which agree with the state probabilities of fractional linear birth-death process (see \cite{Orsingher2011}, Eq. (3.20)). Further, for $\rho=1$, we have
	\begin{align*}
		p_{hp}(n,t)|_{\gamma=0,\rho=1}&=\frac{(-\lambda)^{n-1}}{n!}\frac{\partial^n}{\partial\lambda^n}\lambda\int_{0}^{\infty}e^{-(1+\lambda t)y}\,\mathrm{d}y\\
		&=\frac{(-\lambda)^{n-1}}{n!}\frac{\partial^n}{\partial\lambda^n}\frac{\lambda}{1+\lambda t}=\frac{(\lambda t)^{n-1}}{(1+\lambda t)^{n+1}},\ n\ge1,
	\end{align*}
	which agree with that of the linear birth-death process as given in (\ref{bdpsp}).
\end{remark}

\begin{theorem}\label{less}
	For $\lambda<\mu$, the state probabilities of GFLBDP are given by
	\begin{align}\label{sp2}
		p_{hp}(n,t)&=\bigg(\frac{\lambda-\mu}{\mu}\bigg)^2\bigg(\frac{\lambda}{\mu}\bigg)^{n-1}\sum_{r=0}^{\infty}\sum_{k=0}^{\infty}\binom{r+n}{r}\bigg(\frac{\lambda}{\mu}\bigg)^r\sum_{m=0}^{n-1}(-1)^m\binom{n-1}{m}\nonumber\\
		&\hspace{4cm}\cdot(-(\mu-\lambda)(r+m+1)t^\rho)^kE_{\alpha,k\rho+1}^{k\gamma}(-\beta t^\alpha),\ n\ge1,\ \ t\ge0.
	\end{align}
\end{theorem}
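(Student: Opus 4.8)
The plan is to mirror the strategy used for the extinction probability and for the $\lambda=\mu$ state probabilities: start from the time-changed representation \eqref{tcpgf}, insert the explicit linear birth-death probabilities, pass to the Laplace transform in $t$ via \eqref{tclap}, and finally invert term by term using \eqref{mllap}. Substituting the $\lambda\neq\mu$ branch of \eqref{bdpsp} into \eqref{tcpgf} first requires recasting $p(n,x)$ into a form whose exponential factors decay, since for $\lambda<\mu$ one has $e^{-(\lambda-\mu)x}\to\infty$ as $x\to\infty$. Factoring the growing exponentials out of both numerator and denominator, I expect to arrive at
$$p(n,x)=\left(\frac{\mu-\lambda}{\mu}\right)^2\left(\frac{\lambda}{\mu}\right)^{n-1}e^{-(\mu-\lambda)x}\bigl(1-e^{-(\mu-\lambda)x}\bigr)^{n-1}\left(1-\frac{\lambda}{\mu}\,e^{-(\mu-\lambda)x}\right)^{-(n+1)},$$
which is the decomposition that drives every subsequent step, and whose prefactor already matches the stated $\bigl((\lambda-\mu)/\mu\bigr)^2(\lambda/\mu)^{n-1}$.

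Next I would expand the two parenthesised factors: the finite binomial $\bigl(1-e^{-(\mu-\lambda)x}\bigr)^{n-1}=\sum_{m=0}^{n-1}(-1)^m\binom{n-1}{m}e^{-m(\mu-\lambda)x}$, and the generalized binomial series $\bigl(1-\tfrac{\lambda}{\mu}e^{-(\mu-\lambda)x}\bigr)^{-(n+1)}=\sum_{r=0}^{\infty}\binom{n+r}{r}(\lambda/\mu)^r e^{-r(\mu-\lambda)x}$, the latter converging because $\tfrac{\lambda}{\mu}e^{-(\mu-\lambda)x}<1$ for $x\ge0$. Collecting exponentials produces a double sum of terms proportional to $e^{-(\mu-\lambda)(r+m+1)x}$. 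Taking the Laplace transform in $t$ through \eqref{tclap} leaves, for each such term, the elementary integral $\int_0^\infty e^{-((\mu-\lambda)(r+m+1)+w^\rho(1+\beta w^{-\alpha})^\gamma)x}\,\mathrm{d}x$, which equals $\bigl((\mu-\lambda)(r+m+1)+w^\rho(1+\beta w^{-\alpha})^\gamma\bigr)^{-1}$.

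Multiplying by the prefactor $w^{\rho-1}(1+\beta w^{-\alpha})^\gamma$ and factoring $w^\rho(1+\beta w^{-\alpha})^\gamma$ out of the denominator gives $\tfrac1w\bigl(1+(\mu-\lambda)(r+m+1)w^{-\rho}(1+\beta w^{-\alpha})^{-\gamma}\bigr)^{-1}$; expanding this as a geometric series in the regime $|(\mu-\lambda)(r+m+1)w^{-\rho}(1+\beta w^{-\alpha})^{-\gamma}|<1$ yields $\sum_{k=0}^{\infty}(-(\mu-\lambda)(r+m+1))^k w^{-k\rho-1}(1+\beta w^{-\alpha})^{-k\gamma}$. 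Inverting each summand by \eqref{mllap}, exactly as in the earlier theorems, turns $w^{-k\rho-1}(1+\beta w^{-\alpha})^{-k\gamma}$ into $t^{k\rho}E_{\alpha,k\rho+1}^{k\gamma}(-\beta t^\alpha)$ and reassembles the claimed triple sum \eqref{sp2}. The main obstacle is not any single integral but the bookkeeping: justifying the interchange of the finite $m$-summation and the infinite $r$- and $k$-summations with the $x$-integral and the Laplace inversion, and confirming that the generalized binomial expansion converges absolutely enough to integrate term by term. The initial algebraic recasting of $p(n,x)$ into a purely decaying form is the step on which everything hinges, and care is needed to track the cancellation of signs and of powers of $e^{-(\mu-\lambda)x}$ so that the $(\mu-\lambda)^2/\mu^2$ and $(\lambda/\mu)^{n-1}$ prefactors emerge correctly.
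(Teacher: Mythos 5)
Your proposal is correct and follows essentially the same route as the paper's proof: the same recasting of $p(n,x)$ into the decaying-exponential form with prefactor $\big(\tfrac{\lambda-\mu}{\mu}\big)^2\big(\tfrac{\lambda}{\mu}\big)^{n-1}$, the same binomial and geometric-series expansions after applying \eqref{tclap}, and the same term-by-term inversion via \eqref{mllap}. The algebraic identity you single out as the hinge is exactly the step the paper performs inside its Laplace-transform computation, and your sign and exponent bookkeeping checks out.
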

\begin{proof}
	By using (\ref{bdpsp}) and (\ref{tcrep}), we have
	\begin{equation}
		p_{hp}(n,t)=(\lambda-\mu)^2\int_{0}^{\infty}e^{-(\lambda-\mu)x}\frac{(\lambda-\lambda e^{-(\lambda-\mu)x})^{k-1}}{(\lambda-\mu e^{-(\lambda-\mu)x})^{k+1}}\mathrm{Pr}\{\mathscr{Q}(t)\in\mathrm{d}x\}.
	\end{equation}
	On using (\ref{tclap}), its Laplace transform is given by 
	{\small\begin{align*}
		\int_{0}^{\infty}e^{-wt}p_{hp}(n,t)\,\mathrm{d}t
		&=w^{\rho-1}(1+\beta w^{-\alpha})^\gamma(\lambda-\mu)^2(-\lambda)^{n-1}\\
		&\hspace{0.8cm}\cdot\int_{0}^{\infty}e^{-(\mu-\lambda)x}\frac{(1-e^{-(\mu-\lambda)x})^{n-1}}{(-\mu)^{n+1}(1-\lambda/\mu e^{-(\mu-\lambda)x})^{n+1}}\exp(-w^{\rho}(1+\beta w^{-\alpha})^\gamma x)\,\mathrm{d}x\\
		&=w^{\rho-1}(1+\beta w^{-\alpha})^\gamma\bigg(\frac{\lambda-\mu}{\mu}\bigg)^2\bigg(\frac{\lambda}{\mu}\bigg)^{n-1}\sum_{r=0}^{\infty}\binom{r+n}{r}\bigg(\frac{\lambda}{\mu}\bigg)^r\sum_{m=0}^{n-1}(-1)^m\binom{n-1}{m}\\
		&\hspace{3cm}\cdot\int_{0}^{\infty}\exp(-(w^{\rho}(1+\beta w^{-\alpha})^\gamma+(\mu-\lambda)(r+m+1))x)\,\mathrm{d}x\\
		&=\bigg(\frac{\lambda-\mu}{\mu}\bigg)^2\bigg(\frac{\lambda}{\mu}\bigg)^{n-1}\sum_{r=0}^{\infty}\binom{r+n}{r}\bigg(\frac{\lambda}{\mu}\bigg)^r\sum_{m=0}^{n-1}(-1)^m\binom{n-1}{m}\\
		&\hspace{4cm}\cdot\frac{w^{\rho-1}(1+\beta w^{-\alpha})^\gamma}{w^{\rho}(1+\beta w^{-\alpha})^\gamma+(\mu-\lambda)(r+m+1)}\\
		&=\bigg(\frac{\lambda-\mu}{\mu}\bigg)^2\bigg(\frac{\lambda}{\mu}\bigg)^{n-1}\sum_{r=0}^{\infty}\binom{r+n}{r}\bigg(\frac{\lambda}{\mu}\bigg)^r\sum_{m=0}^{n-1}(-1)^m\binom{n-1}{m}\\
		&\hspace{2.5cm}\cdot\frac{1}{w}\bigg({1+\frac{(\mu-\lambda)(r+m+1)}{w^{\rho}(1+\beta w^{-\alpha})^\gamma}}\bigg)^{-1},\ \bigg|\frac{(\mu-\lambda)(r+m+1)}{w^{\rho}(1+\beta w^{-\alpha})^\gamma}\bigg|<1,\\
		&=\bigg(\frac{\lambda-\mu}{\mu}\bigg)^2\bigg(\frac{\lambda}{\mu}\bigg)^{n-1}\sum_{r=0}^{\infty}\binom{r+n}{r}\bigg(\frac{\lambda}{\mu}\bigg)^r\sum_{m=0}^{n-1}(-1)^m\binom{n-1}{m}\\
		&\hspace{4cm}\cdot\sum_{k=0}^{\infty}(-(\mu-\lambda)(r+m+1))^kw^{-k\rho-1}(1+\beta w^{-\alpha})^{-k\gamma},
	\end{align*}}
	whose inverse Laplace transform gives the required result.
\end{proof}
\begin{remark}
	For $\gamma=0$, the state probabilities (\ref{sp2}) reduce to 
	{\footnotesize\begin{align*}
		p_{hp}(n,t)|_{\gamma=0}&=\bigg(\frac{\lambda-\mu}{\mu}\bigg)^2\bigg(\frac{\lambda}{\mu}\bigg)^{n-1}\sum_{r=0}^{\infty}\sum_{k=0}^{\infty}\binom{r+n}{r}\bigg(\frac{\lambda}{\mu}\bigg)^r\sum_{m=0}^{n-1}(-1)^m\binom{n-1}{m}\frac{(-(\mu-\lambda)(r+m+1)t^\rho)^k}{\Gamma(k\rho+1)}\\
		&=\bigg(\frac{\lambda-\mu}{\mu}\bigg)^2\bigg(\frac{\lambda}{\mu}\bigg)^{n-1}\sum_{r=0}^{\infty}\sum_{k=0}^{\infty}\binom{r+n}{r}\bigg(\frac{\lambda}{\mu}\bigg)^r\sum_{m=0}^{n-1}(-1)^m\binom{n-1}{m}E_{\rho,1}(-(\mu-\lambda)(r+m+1)t^\rho),\ n\ge1,
	\end{align*}}
	which agree with Eq. (3.16) of \cite{Orsingher2011}. Further, for $\rho=1$, these reduce to
	{\small\begin{align*}
		p_{hp}(n,t)_{\gamma=0}&=\bigg(\frac{\lambda-\mu}{\mu}\bigg)^2\bigg(\frac{\lambda}{\mu}\bigg)^{n-1}\sum_{r=0}^{\infty}\sum_{k=0}^{\infty}\binom{r+n}{r}\bigg(\frac{\lambda}{\mu}\bigg)^r\sum_{m=0}^{n-1}(-1)^m\binom{n-1}{m}\exp(-(\mu-\lambda)(r+m+1)t)\\
		&=\bigg(\frac{\lambda-\mu}{\mu}\bigg)^2\bigg(\frac{\lambda}{\mu}\bigg)^{n-1}e^{-(\mu-\lambda)t}\sum_{r=0}^{\infty}\binom{r+n}{r}\bigg(\frac{\lambda}{\mu}\bigg)^re^{-r(\mu-\lambda)t}\sum_{m=0}^{n-1}(-1)^m\binom{n-1}{m}e^{-m(\mu-\lambda)t}\\
		&=\bigg(\frac{\lambda-\mu}{\mu}\bigg)^2\bigg(\frac{\lambda}{\mu}\bigg)^{n-1}e^{-(\mu-\lambda)t}\bigg(1-\frac{\lambda}{\mu}e^{-(\mu-\lambda)t}\bigg)^{-(n+1)}(1-e^{-(\mu-\lambda)t})^{n-1},\ n\ge1,
	\end{align*}}
	which coincide with (\ref{bdpsp}).
\end{remark}

Next result gives the state probabilities of GFLBDP when the birth rate is higher than the death rate. Its proof follows along the similar lines to that of Theorem \ref{less}. Thus, it is omitted.
\begin{theorem}
	For $\lambda>\mu$, the state probabilities of GFLBDP are given by
	\begin{align}\label{sp3}
		p_{hp}(n,t)&=\bigg(1-\frac{\mu}{\lambda}\bigg)^2\sum_{r=0}^{\infty}\sum_{k=0}^{\infty}\binom{r+n}{r}\bigg(\frac{\mu}{\lambda}\bigg)^r\sum_{m=0}^{n-1}(-1)^m\binom{n-1}{m}\nonumber\\
		&\hspace{4cm}\cdot(-(\lambda-\mu)(r+m+1)t^\rho)^kE_{\alpha,k\rho+1}^{k\gamma}(-\beta t^\alpha),\ n\ge1.
	\end{align}
\end{theorem}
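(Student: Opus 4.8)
The plan is to follow the time-change strategy used for Theorem~\ref{less}, the only genuine difference being the sign bookkeeping in the exponentials. First I would invoke the time-changed representation (\ref{tcrep}) together with the explicit linear birth-death probabilities (\ref{bdpsp}) in the case $\lambda\neq\mu$ to write, as in (\ref{tcpgf}),
\[
p_{hp}(n,t)=(\lambda-\mu)^2\int_{0}^{\infty}e^{-(\lambda-\mu)x}\frac{(\lambda-\lambda e^{-(\lambda-\mu)x})^{n-1}}{(\lambda-\mu e^{-(\lambda-\mu)x})^{n+1}}\,\mathrm{Pr}\{\mathscr{Q}(t)\in\mathrm{d}x\},\quad n\ge1.
\]
Taking the Laplace transform in $t$ and using (\ref{tclap}) replaces $\mathrm{Pr}\{\mathscr{Q}(t)\in\mathrm{d}x\}$ by the kernel $w^{\rho-1}(1+\beta w^{-\alpha})^\gamma\exp(-w^{\rho}(1+\beta w^{-\alpha})^\gamma x)\,\mathrm{d}x$, reducing the problem to an elementary $x$-integral.

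The key algebraic simplification is cleaner here than in Theorem~\ref{less}: since $\lambda>\mu$ the exponential $e^{-(\lambda-\mu)x}$ already decays, so no preliminary rewriting in terms of $e^{-(\mu-\lambda)x}$ is needed. I would simply factor $\lambda$ out of numerator and denominator, writing $\lambda-\lambda e^{-(\lambda-\mu)x}=\lambda(1-e^{-(\lambda-\mu)x})$ and $\lambda-\mu e^{-(\lambda-\mu)x}=\lambda(1-\tfrac{\mu}{\lambda}e^{-(\lambda-\mu)x})$, so that the constant prefactor collapses to $(\lambda-\mu)^2/\lambda^2=(1-\mu/\lambda)^2$. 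Then I expand the two resulting factors: the negative-binomial series $(1-\tfrac{\mu}{\lambda}e^{-(\lambda-\mu)x})^{-(n+1)}=\sum_{r=0}^{\infty}\binom{r+n}{r}(\mu/\lambda)^r e^{-r(\lambda-\mu)x}$ and the finite binomial $(1-e^{-(\lambda-\mu)x})^{n-1}=\sum_{m=0}^{n-1}(-1)^m\binom{n-1}{m}e^{-m(\lambda-\mu)x}$. Together with the leading factor $e^{-(\lambda-\mu)x}$, a generic term carries the exponential $e^{-(\lambda-\mu)(r+m+1)x}$.

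Integrating each such term against the kernel gives $\bigl(w^{\rho}(1+\beta w^{-\alpha})^\gamma+(\lambda-\mu)(r+m+1)\bigr)^{-1}$; multiplying through by $w^{\rho-1}(1+\beta w^{-\alpha})^\gamma$ and expanding the factor $\bigl(1+(\lambda-\mu)(r+m+1)w^{-\rho}(1+\beta w^{-\alpha})^{-\gamma}\bigr)^{-1}$ in a geometric series (valid whenever $|(\lambda-\mu)(r+m+1)w^{-\rho}(1+\beta w^{-\alpha})^{-\gamma}|<1$) produces $\sum_{k=0}^{\infty}(-(\lambda-\mu)(r+m+1))^k w^{-k\rho-1}(1+\beta w^{-\alpha})^{-k\gamma}$. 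Inverting term by term with (\ref{mllap}), taking there $\gamma\mapsto k\gamma$, second index $k\rho+1$ and $c=-\beta$, each summand $w^{-k\rho-1}(1+\beta w^{-\alpha})^{-k\gamma}$ inverts to $t^{k\rho}E_{\alpha,k\rho+1}^{k\gamma}(-\beta t^\alpha)$, which assembles into exactly the triple sum (\ref{sp3}).

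The only point requiring care---and the main obstacle, though a routine one---is justifying the interchange of the three summations (over $r$, $m$, $k$) with the $x$-integration and with the Laplace inversion, and then invoking uniqueness of the Laplace transform to identify the resulting series with $p_{hp}(n,t)$ for all $t\ge0$. Since the series are of the same form as those encountered in Theorem~\ref{less}, these interchanges are validated by identical absolute-convergence estimates on the stated half-plane; this is precisely the sense in which the proof proceeds along the same lines, and it is why the detailed computation may be omitted.
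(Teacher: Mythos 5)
Your proposal is correct and is precisely the computation the paper omits: the paper states that the proof of this theorem "follows along the similar lines to that of Theorem \ref{less}" and your argument is exactly that Laplace-transform/series-expansion route, with the correct observation that factoring $\lambda$ out of numerator and denominator yields the prefactor $(1-\mu/\lambda)^2$ and that the exponent $e^{-(\lambda-\mu)(r+m+1)x}$ leads, after the geometric expansion and term-by-term inversion via (\ref{mllap}), to the triple sum (\ref{sp3}).
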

\begin{remark}
	For $\gamma=0$, the state probabilities (\ref{sp3}) reduce to
	{\small\begin{align*}
		p_{hp}(n,t)|_{\gamma=0}&=\bigg(1-\frac{\mu}{\lambda}\bigg)^2\sum_{r=0}^{\infty}\sum_{k=0}^{\infty}\binom{r+n}{r}\bigg(\frac{\mu}{\lambda}\bigg)^r\sum_{m=0}^{n-1}(-1)^m\binom{n-1}{m}\frac{(-(\lambda-\mu)(r+m+1)t^\rho)^k}{\Gamma(k\rho+1)}\\
		&=\bigg(1-\frac{\mu}{\lambda}\bigg)^2\sum_{r=0}^{\infty}\binom{r+n}{r}\bigg(\frac{\mu}{\lambda}\bigg)^r\sum_{m=0}^{n-1}(-1)^m\binom{n-1}{m}E_{\rho,1}(-(\lambda-\mu)(r+m+1)t^\rho),\ n\ge1,
	\end{align*}}
	which coincide with Eq. (3.1) of \cite{Orsingher2011}. Also, for $\rho=1$, we have
	\begin{align*}
	p_{hp}(n,t)|_{\gamma=0,\rho=1}
	&=\bigg(1-\frac{\mu}{\lambda}\bigg)^2\sum_{r=0}^{\infty}\binom{r+n}{r}\bigg(\frac{\mu}{\lambda}\bigg)^r\sum_{m=0}^{n-1}(-1)^m\binom{n-1}{m}\exp(-(\lambda-\mu)(r+m+1)t)\\
	&=\bigg(1-\frac{\mu}{\lambda}\bigg)^2e^{-(\lambda-\mu)t}\sum_{r=0}^{\infty}\binom{r+n}{r}\bigg(\frac{\mu}{\lambda}\bigg)^re^{-r(\lambda-\mu)t}\sum_{m=0}^{n-1}(-1)^m\binom{n-1}{m}e^{-m(\lambda-\mu)t}\\
	&=\bigg(1-\frac{\mu}{\lambda}\bigg)^2e^{-(\lambda-\mu)t}(1-e^{-(\lambda-\mu)t})^{n-1}\bigg(1-\frac{\mu}{\lambda}e^{-(\lambda-\mu)t}\bigg)^{-(n+1)},\ n\ge1,
	\end{align*}
	which agree with (\ref{bdpsp}).
	\end{remark}
	
	\begin{remark}
		For $\rho>\alpha\gamma$ and $\beta>0$ such that $\rho-\alpha\gamma<1$, as mentioned in Remark \ref{asextp1}, Remark \ref{asextp2} and Remark \ref{asextp3} for the extinction probability, a similar limiting result holds true for the state probabilities of GFLBDP. It is given as follows:
		\begin{equation}
			p_{hp}(n,t)\sim\begin{cases}
				\frac{(-1)^n\lambda^{n-1}}{n!}\frac{\partial^n}{\partial\lambda^n}\lambda\int_{0}^{\infty}e^{-y}E_{\rho-\alpha\gamma,1}(-\lambda y\beta^{-\gamma}t^{\rho-\alpha\gamma})\,\mathrm{d}y,\ \lambda=\mu,\vspace{0.2cm}\\
				\big(\frac{\lambda-\mu}{\mu}\big)^2\big(\frac{\lambda}{\mu}\big)^{n-1}\sum_{r=0}^{\infty}\binom{r+n}{r}\big(\frac{\lambda}{\mu}\big)^r\sum_{m=0}^{n-1}(-1)^m\binom{n-1}{m}\\
				\hspace{3cm}\cdot E_{\rho-\alpha\gamma,1}(-(\mu-\lambda)(r+m+1)\beta^{-\gamma}t^{\rho-\alpha\gamma}),\ \lambda<\mu,\vspace{0.2cm}\\
				\big(1-\frac{\mu}{\lambda}\big)^2\sum_{r=0}^{\infty}\binom{r+n}{r}\big(\frac{\mu}{\lambda}\big)^r\sum_{m=0}^{n-1}(-1)^m\binom{n-1}{m}\\
				\hspace{3cm}\cdot E_{\rho-\alpha\gamma,1}(-(\lambda-\mu)(r+m+1)\beta^{-\gamma}t^{\rho-\alpha\gamma}),\ \lambda>\mu.
			\end{cases}
		\end{equation}
		Thus, from Eq. (1.13) of \cite{Orsingher2011}, it follows that in the long run of GFLBDP, its limiting distribution coincides to that of the fractional linear birth-death process of fractional derivative order $\rho-\alpha\gamma$ with birth rate $n\lambda\beta^{-\gamma}$ and death rate  $n\mu\beta^{-\gamma}$, $n\ge0$.
	\end{remark}

\section{Integrals of GFLBDP}\label{sec4}
	Puri \cite{Puri1966} studied the joint distribution of linear birth-death process $\{\mathcal{N}(t)\}_{t\ge0}$ and its path integral. McNeil \cite{McNeil1970} derived the explicit form of the distribution function for the integral $\mathcal{Y}_k=\int_{0}^{\mathcal{T}_k}\mathcal{N}(s)\,\mathrm{d}s$
	of the linear birth-death process. Here, $\mathcal{T}_k$ is the first passage time of linear birth-death process to state $n=0$ given that it starts from the state $n=k$. The integral functionals of type $\mathcal{Y}_k$ are relevant in many real-life situations. For example, in inventory or storage systems, it represents the overall cost of storing a commodity until it is either sold or expired. In \cite{Vishwakarma2024b}, some integrals of the fractional linear birth-death process are studied. The study of integrals of random processes is motivated due to their occurrence in various fields of applied mathematics (see \cite{Orsingher2013}, \cite{Vishwakarma2024a}, and reference therein).
	
	Here, we consider the Prabhakar integral of GFLBDP. For $t\ge0$, the Prabhakar integral of the GFLBDP is defined as follows:
	\begin{equation}\label{piN}
		Y_{\alpha',\rho',\beta'}^{\gamma'}(t)=\int_{0}^{t}(t-s)^{\rho'-1}E_{\alpha',\rho'}^{\gamma'}(\beta'(t-s)^{\alpha'})\mathcal{N}_{hp}(s)\,\mathrm{d}s,\ 0<\alpha'\leq1,\ 0<\rho'\leq1,\ \beta'>0,\ \gamma'\ge0.
	\end{equation}
	On using (\ref{meannhp}), its mean is given by
	\begin{align*}
		\mathbb{E}Y_{\alpha',\rho',\beta'}^{\gamma'}(t)&=\sum_{k=0}^{\infty}(\lambda-\mu)^k\int_{0}^{t}(t-s)^{\rho'-1}E_{\alpha',\rho'}^{\gamma'}(\beta'(t-s)^{\alpha'})s^{k\rho}E_{\alpha,k\rho+1}^{k\gamma}(-\beta s^\alpha)\,\mathrm{d}s\\
		&=\sum_{k=0}^{\infty}\sum_{r=0}^{\infty}\sum_{l=0}^{\infty}\frac{(-1)^l(\lambda-\mu)^k\beta'^{r}\beta^{l}(\gamma')_r(k\gamma)_l}{\Gamma(r\alpha'+\rho')\Gamma(l\alpha+k\rho+1)r!l!}\int_{0}^{t}s^{k\rho+\alpha l}(t-s)^{\rho'+r\alpha'-1}\,\mathrm{d}s\\
		&=\sum_{k=0}^{\infty}\sum_{r=0}^{\infty}\sum_{l=0}^{\infty}\frac{(-1)^l(\lambda-\mu)^k\beta'^{r}\beta^{l}(\gamma')_r(k\gamma)_l}{\Gamma(r\alpha'+\rho')\Gamma(l\alpha+k\rho+1)r!l!}t^{k\rho+l\alpha+\rho'+r\alpha'}\int_{0}^{1}u^{k\rho+l\alpha}(1-u)^{\rho'+r\alpha'-1}\,\mathrm{d}u\\
		&=\sum_{k=0}^{\infty}\sum_{r=0}^{\infty}\sum_{l=0}^{\infty}\frac{(-1)^l(\lambda-\mu)^k\beta'^{r}\beta^{l}(\gamma')_r(k\gamma)_l}{\Gamma(k\rho+l\alpha+\rho'+r\alpha'+1)r!l!}t^{k\rho+l\alpha+\rho'+r\alpha'}\\
		&=\sum_{k=0}^{\infty}\sum_{r=0}^{\infty}\frac{(\lambda-\mu)^k\beta'^{r}(\gamma')_r}{r!}t^{k\rho+\rho'+r\alpha'}E_{\alpha,k\rho+\rho'+r\alpha'+1}^{k\gamma}(-\beta t^\alpha),\ t\ge0.
	\end{align*}
\begin{remark}
	For $\gamma=0$, the integral (\ref{piN}) reduces to the Prabhakar integral of the fractional linear birth-death process whose mean is given by
	\begin{align}
		\mathbb{E}Y_{\alpha',\rho',\beta'}^{\gamma'}(t)|_{\gamma=0}&=\sum_{k=0}^{\infty}\sum_{r=0}^{\infty}\frac{(\lambda-\mu)^k\beta'^{r}(\gamma')_rt^{k\rho+\rho'+r\alpha'}}{r!\Gamma(k\rho+\rho'+r\alpha'+1)}\nonumber\\
		&=\sum_{k=0}^{\infty}(\lambda-\mu)^kt^{k\rho+\rho'}E_{\alpha',k\rho+\rho'+1}^{\gamma'}(\beta't^{\alpha'}),\ t\ge0.\label{meanintg}
	\end{align}
	For $\gamma'=0$, the mean \eqref{meanintg} reduces to the Riemann-Liouville fractional integral of the fractional linear birth-death process (see \cite{Vishwakarma2024b}).
	\end{remark}
	
	For $\gamma=\gamma'=0$ and $\rho=\rho'=1$, the integral (\ref{piN}) reduces to the path integral of the homogeneous linear birth-death process. It is given by $\mathcal{Y}(t)=\int_{0}^{t}\mathcal{N}(s)\,\mathrm{d}s$. The joint distribution $p(n,x,t)=\mathrm{Pr}\{\mathcal{N}(t)=n,\mathcal{Y}(t)\leq x\}$, $n\ge0$, $x\ge0$ of bivariate random process $\{(\mathcal{N}(t),\mathcal{Y}(t))\}_{t\ge0}$ solves the following system of partial differential equations (see \cite{McNeil1971}, Eq. (2.1)):
	\begin{equation*}
		\partial_tp(n,x,t)+n\partial_xp(n,x,t)=-n(\lambda+\mu)p(n,x,t)+(n-1)\lambda p(n-1,x,t)+(n+1)\mu p(n+1,x,t),
	\end{equation*}
	with initial condition $p(1,0,0)=1$. 
	
	Let $r_1(v)$ and $r_2(v)$ be the roots of $\lambda u^2+(iv-\lambda-\mu)u+\mu=0$. Then, the characteristic function $\Phi(u,v,t)=\mathbb{E}\exp(iu\mathcal{N}(t)+iv\mathcal{Y}(t))$, $u\in\mathbb{R}$, $v\in\mathbb{R}$ is given by (see \cite{Puri1966}, Eq. (15))
	\begin{equation}\label{jch}
	\Phi(u,v,t)=\bigg(r_2(v)+\frac{r_1(v)-r_2(v)}{1-(e^{iu}-r_1(v))(e^{iu}-r_2(v))^{-1}e^{\lambda(r_1(v)-r_2(v))t}}\bigg),\ t\ge0,
	\end{equation}
where  $r_1(v)=(2\lambda)^{-1}({\lambda+\mu-iv+\sqrt{(\lambda+\mu-iv)^2-4\lambda\mu}})$ and  $r_2(v)=(2\lambda)^{-1}(\lambda+\mu-iv - $ $\sqrt{(\lambda+\mu-iv)^2-4\lambda\mu})$.
	
	Let us consider a bivariate random process $\{(\mathcal{N}_{hp}(t),\mathcal{Y}_{hp}(t))\}_{t\ge0}$ whose distribution $p_{hp}(n,x,t)$ $=\mathrm{Pr}\{\mathcal{N}_{hp}(t)=n,\mathcal{Y}(t)\leq x\}$, $n\ge0$, $x\ge0$ solves the following system of fractional differential equations:
	\begin{equation*}
		{}^{hp}\mathcal{D}_{\alpha, -\beta}^{\gamma,\rho}p(n,x,t)+n\partial_xp(n,x,t)=-n(\lambda+\mu)p(n,x,t)+(n-1)\lambda p(n-1,x,t)+(n+1)\mu p(n+1,x,t),
	\end{equation*} 
	where ${}^{hp}\mathcal{D}_{\alpha, -\beta}^{\gamma,\rho}$ is the regularized fractional derivative as defined in (\ref{fder}).
	
Let $\{\mathscr{Q}(t)\}_{t\ge0}$ be a random process whose density solves (\ref{tc}). Then, on following the similar lines of the proof of Theorem \ref{thmtc}, it can be established that
	\begin{equation}\label{eqdmj}
	(\mathcal{N}_{hp}(t),\mathcal{Y}_{hp}(t))\overset{d}{=}(\mathcal{N}(\mathscr{Q}(t)),\mathcal{Y}(\mathscr{Q}(t))),
	\end{equation} 
where $\{\mathscr{Q}(t)\}_{t\ge0}$ is independent of the 	bivariate process $\{(\mathcal{N}(t),\mathcal{Y}(t))\}_{t\ge0}$. So, $\{(\mathcal{N}_{hp}(t),\mathcal{Y}_{hp}(t))\}_{t\ge0}$ is a time-changed variant of $\{(\mathcal{N}(t),\mathcal{Y}(t))\}_{t\ge0}$.

\begin{theorem}
The characteristic function $\Phi_{hp}(u,v,t)=\mathbb{E}\exp(iu\mathcal{N}_{hp}(t)+iv\mathcal{Y}_{hp}(t))$, $u\in\mathbb{R}$, $v\in\mathbb{R}$ is given by
{\small\begin{equation}\label{fjch}
\Phi_{hp}(u,v,t)=r_2(v)-(r_1(v)-r_2(v))\sum_{k=0}^{\infty}\sum_{l=0}^{\infty}\bigg(\frac{e^{iu}-r_2(v)}{e^{iu}-r_1(v)}\bigg)^{k+1}(\lambda(r_1(v)-r_2(v))(k+1)t^\rho)^lE_{\alpha,l\rho+1}^{l\gamma}(-\beta t^\alpha).
\end{equation}}
\end{theorem}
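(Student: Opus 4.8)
The plan is to reduce the fractional computation to the non-fractional characteristic function \eqref{jch} through the time-changed representation \eqref{eqdmj}, and then carry out the work in the Laplace domain where the density of $\{\mathscr{Q}(t)\}_{t\ge0}$ is explicit via \eqref{tclap}. Since $\{\mathscr{Q}(t)\}_{t\ge0}$ is independent of $\{(\mathcal{N}(t),\mathcal{Y}(t))\}_{t\ge0}$, conditioning on $\mathscr{Q}(t)$ in \eqref{eqdmj} gives
\[
\Phi_{hp}(u,v,t)=\int_0^\infty \Phi(u,v,x)\,\mathrm{Pr}\{\mathscr{Q}(t)\in\mathrm{d}x\}.
\]
Taking the Laplace transform in $t$ and substituting \eqref{tclap}, I obtain
\[
\int_0^\infty e^{-wt}\Phi_{hp}(u,v,t)\,\mathrm{d}t=w^{\rho-1}(1+\beta w^{-\alpha})^\gamma\int_0^\infty \Phi(u,v,x)\,e^{-s\,x}\,\mathrm{d}x,\qquad s\coloneqq w^\rho(1+\beta w^{-\alpha})^\gamma.
\]
This is precisely the device already used for the extinction probabilities, so the remaining task is to evaluate the $x$-integral of \eqref{jch} and invert.

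Writing $A=e^{iu}-r_1(v)$, $B=e^{iu}-r_2(v)$ and $\theta=\lambda(r_1(v)-r_2(v))=\sqrt{(\lambda+\mu-iv)^2-4\lambda\mu}$, the rational factor in \eqref{jch} is $(r_1-r_2)/(1-(A/B)e^{\theta x})$. The cleanest way to integrate it against $e^{-sx}$ is the substitution $y=e^{-\theta x}$, which turns the integral into $\theta^{-1}\int_0^1 y^{s/\theta}(y-A/B)^{-1}\,\mathrm{d}y$; expanding $(y-A/B)^{-1}$ geometrically and integrating in $y$ produces
\[
\int_0^\infty \frac{(r_1-r_2)e^{-sx}}{1-(A/B)e^{\theta x}}\,\mathrm{d}x=-(r_1-r_2)\sum_{k=0}^\infty\Big(\tfrac{B}{A}\Big)^{k+1}\frac{1}{s+\theta(k+1)},
\]
while the $r_2$ term contributes $r_2/s$. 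Multiplying by $w^{\rho-1}(1+\beta w^{-\alpha})^\gamma$, the constant piece collapses to $r_2/w$ (inverting to $r_2(v)$), and each $k$-summand becomes
\[
\frac{1}{w}\bigg(1+\frac{\theta(k+1)}{w^\rho(1+\beta w^{-\alpha})^\gamma}\bigg)^{-1}=\sum_{l=0}^\infty\big(-\theta(k+1)\big)^l\,w^{-l\rho-1}(1+\beta w^{-\alpha})^{-l\gamma},
\]
valid for $|\theta(k+1)w^{-\rho}(1+\beta w^{-\alpha})^{-\gamma}|<1$.

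The last step is term-by-term inversion. Rewriting $w^{-l\rho-1}(1+\beta w^{-\alpha})^{-l\gamma}=w^{\alpha l\gamma-(l\rho+1)}(w^\alpha+\beta)^{-l\gamma}$ and matching \eqref{mllap} with second index $l\rho+1$, Prabhakar index $l\gamma$ and $c=-\beta$ gives
\[
\int_0^\infty e^{-wt}\,t^{l\rho}E_{\alpha,l\rho+1}^{l\gamma}(-\beta t^\alpha)\,\mathrm{d}t=w^{-l\rho-1}(1+\beta w^{-\alpha})^{-l\gamma},
\]
so that collecting the double sum over $k$ and $l$ and restoring $\theta=\lambda(r_1(v)-r_2(v))$ reproduces \eqref{fjch}. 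I expect the principal difficulty to be rigour rather than algebra: one must justify interchanging the $x$-integral with the two geometric expansions and with the Laplace inversion, each of which is valid only on a restricted domain — the $y$-expansion requires $|B/A|<1$ and the $l$-series requires $w$ large — after which the identity is extended to all real $(u,v)$ and all admissible $w$ by analyticity of both sides, exactly as in the extinction-probability arguments where absolute convergence of the Laplace integrands for $w$ in a right half-plane secures every interchange.
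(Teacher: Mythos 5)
Your proposal is correct and follows essentially the same route as the paper: conditioning on $\mathscr{Q}(t)$ via \eqref{eqdmj}, passing to the Laplace domain with \eqref{tclap}, expanding the rational factor of \eqref{jch} geometrically in powers of $e^{-\lambda(r_1(v)-r_2(v))x}$ (your substitution $y=e^{-\theta x}$ is only a cosmetic repackaging of the same expansion), re-expanding each resulting term $\big(1+\theta(k+1)w^{-\rho}(1+\beta w^{-\alpha})^{-\gamma}\big)^{-1}$ as a series, and inverting term by term with \eqref{mllap}. Your closing remarks on justifying the interchanges go beyond what the paper records, but the computation itself is the same.
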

\begin{proof}
	 From (\ref{jch}) and (\ref{eqdmj}), we have
	 \begin{equation*}
	 	\Phi_{hp}(u,v,t)=\int_{0}^{\infty}\bigg(r_2(v)+\frac{r_1(v)-r_2(v)}{1-(e^{iu}-r_1(v))(e^{iu}-r_2(v))^{-1}e^{\lambda(r_1(v)-r_2(v))x}}\bigg)\mathrm{Pr}\{\mathscr{Q}(t)\in\mathrm{d}x\}.
	 \end{equation*}
	 On using (\ref{tclap}), its Laplace transform is
	{\small\begin{align}
		\int_{0}^{\infty}&e^{-wt}\Phi_{hp}(u,v,t)\,\mathrm{d}t\nonumber\\
		&=w^{\rho-1}(1+\beta w^{-\alpha})^\gamma\int_{0}^{\infty}\bigg(r_2(v)+\frac{r_1(v)-r_2(v)}{1-(e^{iu}-r_1(v))(e^{iu}-r_2(v))^{-1}e^{\lambda(r_1(v)-r_2(v))x}}\bigg)\nonumber\\
		&\hspace{9.5cm}\cdot\exp(-w^{\rho}(1+\beta w^{-\alpha})^\gamma x)\,\mathrm{d}x\nonumber\\
		&=\frac{r_2(v)}{w}-(r_1(v)-r_2(v))\frac{e^{iu}-r_2(v)}{e^{iu}-r_1(v)}w^{\rho-1}(1+\beta w^{-\alpha})^\gamma\nonumber\\
		&\hspace{1cm}\cdot\int_{0}^{\infty}e^{-\lambda(r_1(v)-r_2(v))s}\bigg(1-\frac{e^{iu}-r_2(v)}{e^{iu}-r_1(v)}e^{-\lambda(r_1(v)-r_2(v))x}\bigg)^{-1}\exp(-w^{\rho}(1+\beta w^{-\alpha})^\gamma x)\,\mathrm{d}x\nonumber\\
		&=\frac{r_2(v)}{w}-(r_1(v)-r_2(v))\sum_{k=0}^{\infty}\bigg(\frac{e^{iu}-r_2(v)}{e^{iu}-r_1(v)}\bigg)^{k+1}w^{\rho-1}(1+\beta w^{-\alpha})^\gamma\nonumber\\
		&\hspace{7.6cm}\cdot\int_{0}^{\infty}e^{-(w^{\rho}(1+\beta w^{-\alpha})^\gamma+\lambda(r_1(v)-r_2(v))(k+1))x}\,\mathrm{d}x\nonumber\\
		&=\frac{r_2(v)}{w}-(r_1(v)-r_2(v))\sum_{k=0}^{\infty}\bigg(\frac{e^{iu}-r_2(v)}{e^{iu}-r_1(v)}\bigg)^{k+1}\frac{w^{\rho-1}(1+\beta w^{-\alpha})^\gamma}{w^{\rho}(1+\beta w^{-\alpha})^\gamma+\lambda(r_1(v)-r_2(v))(k+1)}\nonumber\\
		&=\frac{r_2(v)}{w}-(r_1(v)-r_2(v))\sum_{k=0}^{\infty}\bigg(\frac{e^{iu}-r_2(v)}{e^{iu}-r_1(v)}\bigg)^{k+1}\sum_{r=0}^{\infty}(\lambda(r_1(v)-r_2(v))(k+1))^rw^{r\rho-1}(1+\beta w^{-\alpha})^{-r\gamma},\label{chflap}
	\end{align}}
	where $|(\lambda(r_1(v)-r_2(v))(k+1))w^{-\rho}(1+\beta w^{-\alpha})^{-\gamma}|<1$, $w>0$. On taking the inverse Laplace transform in (\ref{chflap}) and using (\ref{mllap}), we get the required result.
\end{proof}

\begin{remark}
	From (\ref{mittaglim}), for large $t$, the joint characteristics function (\ref{fjch}) has the following limiting approximation:
{\small\begin{align*}
			\Phi_{hp}(u,v,t)&\sim r_2(v)-(r_1(v)-r_2(v))\sum_{k=0}^{\infty}\sum_{l=0}^{\infty}\bigg(\frac{e^{iu}-r_2(v)}{e^{iu}-r_1(v)}\bigg)^{k+1}(\lambda(r_1(v)-r_2(v))(k+1)t^\rho)^l\frac{(\beta t^\alpha)^{-l\gamma}}{\Gamma(l\rho+1-\alpha l\gamma)}\nonumber\\
			&=r_2(v)-(r_1(v)-r_2(v))\sum_{k=1}^{\infty}\bigg(\frac{e^{iu}-r_2(v)}{e^{iu}-r_1(v)}\bigg)^{k}E_{\rho-\alpha\gamma,1}(\lambda (r_1(v)-r_2(v))kt^{\rho-\alpha\gamma}\beta^{-\gamma}),\label{jocharny}
\end{align*}}
which is the joint characteristics function of a time-changed bivariate process (see \cite{Vishwakarma2024b}, Eq. (4.1)). In view of Remark 4.1 of \cite{Vishwakarma2024b}, it follows that if the birth rate is less than or equal to the death rate then the integral process  $\{\mathcal{Y}_{hp}(t)\}_{t\ge0}$ has stable limiting distribution as $t \to \infty$, and its limiting characteristics function is a linear combination of the characteristic functions of infinitely many scaled chi-square random variables. Also, if the birth rate is higher than the death rate then it converges to a non-finite random variable which takes infinity value with probability $1-\mu/\lambda$.
\end{remark}	
\subsection{Application in a genetic model} In \cite{Moran1964}, a stochastic logistic models is used to study biological population that is confined between two limits, where the death rate increases and birth rate decreases linearly with population size. In \cite{Prendiville1949}, a modified model is suggested which reduces to the Ehrenfest model by shifting the origin. 

Here, we discuss an application of the integral of a linear birth-death process at random time. It will be used to study a genetic model that has an upper bound for the population size.
Let us consider a population consisting of $M= 2K$ haploid individuals, each genetically classified as either type $H$ or type $h$. In this model, the total number of individuals $n=0,1,\dots,M$ of type $H$ represent the state of system. At any instance, a randomly selected haploid individual dies and is replaced by a new individual of type $H$ or type $h$. Also, the state dependent birth and death rates are $\lambda_n=(M-n)\lambda$ and $\mu_n=n\mu$, respectively, where $\lambda$ and $\mu$ are positive constants. Let $\mathscr{N}(t)$ denote the number of individuals of type $H$ at time $t$ with $\mathscr{N}(0)=n_0$, $0< n_0< M$. Then, its path integral $\mathscr{Y}(t)=\int_{0}^{t}\mathscr{N}(s)\,\mathrm{d}s$ represent the total number of individuals of type $H$ up to time $t$.

Let us consider the time-changed bivariate process $(\mathscr{N}_\rho(t),\mathscr{Y}_\rho(t))\coloneqq(\mathscr{N}(\mathscr{Q}(t)),\mathscr{Y}(\mathscr{Q}(t)))|_{\gamma=0}$, $0<\rho\leq1$, $t\ge0$, where $\{\mathscr{Q}(t)\}_{t\ge0}$ is a random process whose density solves (\ref{tc}) and it is independent of $\{(\mathscr{N}(t),\mathscr{Y}(t))\}_{t\ge0}$. Thus, the process $\{(\mathscr{N}_\rho(t),\mathscr{Y}_\rho(t))\}_{t\ge0}$ is a potential mathematical model to study the rapid change in a genetic population in which individuals are haploid. On using Eq. (6.2) of \cite{McNeil1971}, from Theorem \ref{thmtc}, it follows that the joint distribution $p_\rho(n,x,t)=\mathrm{Pr}\{\mathscr{N}(t)=n,\mathscr{Y}(t)\leq x\}$, $x\ge0$, $0\leq n\leq M$ solves
\begin{multline*}
	{}^c\mathcal{D}_t^\rho p_\rho(n,x,t)+n\partial_x p_\rho(n,x,t)=-((M-n)\lambda+n\mu)p_\rho(n,x,t)\\
	+(M-n+1)\lambda p_\rho(n,x,t)+(n+1)\mu p_\rho(n,x,t),
\end{multline*}
with $p_\rho(n_0,0,0)=1$. Here, ${}^c\mathcal{D}_t^\rho$ is the Caputo fractional derivative defined as follows (see \cite{Kilbas2006}):
\begin{equation*}
	{}^c\mathcal{D}_t^\rho f(t)=\begin{cases}
		\frac{1}{\Gamma(1-\rho)}\int_{0}^{t}(t-s)^{-\rho}(\mathrm{d}_sf(s))\,\mathrm{d}s,\ 0<\rho<1,\\
		\mathrm{d}_tf(t),\ \rho=1.
	\end{cases}
\end{equation*}

The state probabilities $p_\rho(n,t)=\lim_{x\to\infty}p_\rho(n,x,t)$ of $\{\mathscr{N}_\rho(t)\}_{t\ge0}$ solve the following system of fractional differential equations:
\begin{equation*}
	{}^c\mathcal{D}_t^\rho p_\rho(n,t)=-((M-n)\lambda+n\mu)p_\rho(n,t)\\
	+(M-n+1)\lambda p_\rho(n-1,t)+(n+1)\mu p_\rho(n,x,t),\ 0\leq n\leq M,
\end{equation*}
with $p_\rho(n_0,0)=1$. As $0$ and $M$ are absorbing states, we assume that $p_\rho(n,t)=0$ for all $n<0$ and $n>M$. So, its pgf $\mathscr{G}_\rho(u,t)=\sum_{n=0}^{M}u^np_\rho(n,t)$, $|u|\leq1$ solves
\begin{equation}\label{apgf}
	{}^c\mathcal{D}_t^\rho\mathscr{G}_\rho(u,t)=-(\lambda u+\mu)(u-1)\partial_u\mathscr{G}_\rho(u,t)+ M\lambda(u-1)\mathscr{G}_\rho(u,t),
\end{equation}
with $\mathscr{G}_\rho(u,0)=u^{n_0}$.

On taking the derivative on both sides of (\ref{apgf}) with respect to $u$ and substituting $u=1$, we have the following  governing differential equation: 
\begin{equation*}
	{}^c\mathcal{D}_t^\rho\mathbb{E}\mathscr{N}_\rho(t)=-(\lambda+\mu)\mathbb{E}\mathscr{N}_\rho(t)+M\lambda
\end{equation*} 
with $\mathbb{E}\mathscr{N}(0)=n_0$. On using (\ref{fderlap}) for $\gamma=0$, we get the following Laplace transform: 
\begin{align*}
	\int_{0}^{\infty}e^{-wt}\mathbb{E}\mathscr{N}_\rho(t)\,\mathrm{d}t&=\frac{w^{\rho-1}n_0}{w^\rho+\lambda+\mu}+\frac{M\lambda}{w(w^\rho+\lambda+\mu)},\ w>0,\\
	&=\frac{w^{\rho-1}n_0}{w^\rho+\lambda+\mu}+\frac{M\lambda}{\lambda+\mu}\bigg(\frac{1}{w}-\frac{w^{\rho-1}}{w^\rho+\lambda+\mu}\bigg).
\end{align*}
Its inversion yields 
\begin{equation}\label{amean}
	\mathbb{E}\mathscr{N}_\rho(t)=n_0E_{\rho,1}(-(\lambda+\mu)t^\rho)+\frac{M\lambda}{\lambda+\mu}(1-E_{\rho,1}(-(\lambda+\mu)t^\rho)),\ t\ge0.
\end{equation}

We note that the marginal process $\{\mathscr{N}_\rho(t)\}_{t\ge0}$, $0<\rho\leq1$ can be used to represent the population of individuals of type $H$ under accelerating conditions. Also, its path integral $\mathscr{X}_\rho(t)=\int_{0}^{t}\mathscr{N}_\rho(s)\,\mathrm{d}s$ represents the total number of individuals of type $H$ up to time $t$. Here, we are interested in the average number of individuals of type $H$ that have existed in the population up to time $t$, and it is given by $\mathscr{A}_\rho(t)=t^{-1}\mathbb{E}\mathscr{X}_\rho(t)$, $t>0$. 

From (\ref{amean}), we have
\begin{align*}
	\mathbb{E}\mathscr{X}_\rho(t)=\int_{0}^{t}\mathbb{E}\mathscr{N}_\rho(s)\,\mathrm{d}s&=\bigg(n_0-\frac{M\lambda}{\lambda+\mu}\bigg)\int_{0}^{t}E_{\rho,1}(-(\lambda+\mu)s^\rho)\,\mathrm{d}s+\frac{M\lambda t}{\lambda+\mu}\\
	&=\bigg(n_0-\frac{M\lambda}{\lambda+\mu}\bigg)\sum_{k=0}^{\infty}\frac{(-1)^k(\lambda+\mu)^k}{\Gamma(k\rho+1)}\int_{0}^{t}s^{k\rho}\,\mathrm{d}s+\frac{M\lambda t}{\lambda+\mu}\\
	&=\bigg(n_0-\frac{M\lambda}{\lambda+\mu}\bigg)\sum_{k=0}^{\infty}\frac{(-1)^k(\lambda+\mu)^kt^{k\rho+1}}{(k\rho+1)\Gamma(k\rho+1)}+\frac{M\lambda t}{\lambda+\mu}\\
	&=\bigg(n_0-\frac{M\lambda}{\lambda+\mu}\bigg)tE_{\rho,2}(-(\lambda+\mu)t^\rho)+\frac{M\lambda t}{\lambda+\mu},\ t>0.
\end{align*}
Thus, 
\begin{equation*}
	\mathscr{A}_\rho(t)=\bigg(n_0-\frac{M\lambda}{\lambda+\mu}\bigg)E_{\rho,2}(-(\lambda+\mu)t^\rho)+\frac{M\lambda }{\lambda+\mu},\ t>0.
\end{equation*}
For sufficiently large $t$, from (\ref{mittaglim}), we have the following limiting approximation of it:
\begin{equation*}
	\mathscr{A}_\rho(t)\sim\bigg(n_0-\frac{M\lambda}{\lambda+\mu}\bigg)\frac{1}{\Gamma(2-\rho)(\lambda+\mu)t^\rho}+\frac{M\lambda }{\lambda+\mu}.
\end{equation*}
\begin{remark}
	For $\rho=1$, the process $\{\mathscr{N}_\rho(t)\}_{t\ge0}$ reduces to the process $\{\mathscr{N}(t)\}_{t\ge0}$. Its mean is given by $\mathbb{E}\mathscr{N}(t)=$ $n_0e^{-(\lambda+\mu)t}+M\lambda(\lambda+\mu)^{-1}(1-e^{(\lambda+\mu)t})$. Hence, the mean of $\mathscr{Y}_\rho(t)$ is 
	\begin{align*}
		\mathbb{E}\mathscr{Y}_\rho(t)&=\int_{0}^{\infty}\mathbb{E}\mathscr{Y}(s)\mathrm{Pr}\{\mathscr{Q}(t)\in\mathrm{d}s\}|_{\gamma=0}\\
		&=\int_{0}^{\infty}\int_{0}^{s}\mathbb{E}\mathscr{N}(x)\,\mathrm{d}x\mathrm{Pr}\{\mathscr{Q}(t)\in\mathrm{d}s\}|_{\gamma=0}\\
		&=\int_{0}^{\infty}\int_{0}^{s}\big(n_0e^{-(\lambda+\mu)x}+M\lambda(\lambda+\mu)^{-1}(1-e^{-(\lambda+\mu)x})\big)\,\mathrm{d}x\mathrm{Pr}\{\mathscr{Q}(t)\in\mathrm{d}s\}|_{\gamma=0}.
	\end{align*}
	On using (\ref{tclap}) for $\gamma=0$, we have 
	\begin{align*}
		\int_{0}^{\infty}e^{-wt}\mathbb{E}\mathscr{Y}_\rho(t)\,\mathrm{d}t&=w^{\rho-1}\int_{0}^{\infty}\int_{0}^{s}\bigg(n_0e^{-(\lambda+\mu)x}+\frac{M\lambda}{\lambda+\mu}(1-e^{-(\lambda+\mu)x})\bigg)\,\mathrm{d}x e^{-w^\rho s}\,\mathrm{d}s,\ w>0,\\
		&=w^{\rho-1}\int_{0}^{\infty}\bigg(\bigg(n_0-\frac{M\lambda}{\lambda+\mu}\bigg)\frac{1-e^{-(\lambda+\mu)s}}{\lambda+\mu}+\frac{M\lambda s}{\lambda+\mu}\bigg)e^{-w^\rho s}\,\mathrm{d}s\\
		&=\frac{1}{\lambda+\mu}\bigg(n_0-\frac{M\lambda}{\lambda+\mu}\bigg)\bigg(\frac{1}{w}-\frac{w^{\rho-1}}{w^\rho+\lambda+\mu}\bigg)+\frac{M\lambda}{(\lambda+\mu)w^{\rho+1}}.
	\end{align*}
	Its inverse Laplace transform gives
	\begin{equation*}
		\mathbb{E}\mathscr{Y}_\rho(t)=\frac{1}{\lambda+\mu}\bigg(n_0-\frac{M\lambda}{\lambda+\mu}\bigg)(1-E_{\rho,1}(-(\lambda+\mu)t^\rho))+\frac{M\lambda t^{\rho}}{(\lambda+\mu)\Gamma(\rho+1)},\ t\ge0.
	\end{equation*}
\end{remark}
\section*{Acknowledgement}
The first author thanks Government of India for the grant of Prime Minister's Research Fellowship, ID 1003066.


\begin{thebibliography}{00}
	\bibitem{Bailey1964}
	 Bailey, N.T.J., 1964. The Elements of Stochastic Processes with Applications to the Natural Sciences.
	New York, Wiley.
	\bibitem{Beghin2009}
	Beghin, L., Orsingher, E., 2009. Fractional Poisson processes and related planar random	motions. {\it Electon. J. Probab.} \textbf{14}(61), 1790-1826.
	\bibitem{Beghin2012}
	Beghin, L., 2012. Random-time processes governed by differential equations of fractional distributed order. \textit{Chaos Solitons Fractals}. \textbf{45}(11), 1314–1327.
	\bibitem{Feller1968}
	Feller, W., 1968. An introduction to probability theory and its applications, vol. 1, 3rd edn., New York, Wiley.
	\bibitem{McNeil1971}
	Gani, J., McNeil, D.R., 1971. Joint distributions of random variables and their integrals for certain birth-death and diffusion processes. \textit{Adv. Appl. Prob.}\textbf{ 3}, 339-352.
	\bibitem{Garra2014}
	Garra, R., Gorenflo, R., Polito, F., Tomovski, Z., 2014. Hilfer-Prabhakar derivatives and some applications. \textit{Appl. Math. Comput.} \textbf{242}, 576–589.
	\bibitem{Kilbas2006}
	Kilbas, A.A., Srivastava, H.M., Trujillo, J.J., 2006. Theory and Applications of Fractional Differential Equa
	tions. Elsevier Science B.V., Amsterdam.
	\bibitem{Kataria2024c}
	Kataria, K.K., Vishwakarma, P., 2024. On transient probabilities of fractional birth-death process. arXiv:2410.17344.
	
	\bibitem{Kataria2024d}
	Kataria, K.K., Vishwakarma, P., 2024. On time-changed linear birth-death process with immigration at extinction. arXiv:2411.09005.
	\bibitem{Kataria2025}
	Kataria, K.K., Vishwakarma, P., 2025. On time-changed linear birth-death-immigration process. \textit{J. Theor. Probab.} \textbf{38}, 21, 24 pp.
	\bibitem{Khandakar2025}
	Khandakar, M., Kumar, V., Vellaisamy, P., 2025. Extension of fractional Poisson process, birth process and death process. \textit{J. Math. Anal. Appl.} \textbf{545}, 129182
\bibitem{Leskin2003}
Laskin, N., 2003. Fractional Poisson process. {\it Commun. Nonlinear Sci. Numer. Simul.} \textbf{8}(3–4), 201–213.
\bibitem{Moran1964}
Moran, P.A.P., 1964. Some general results on random walks, with genetic application. \textit{J. Australian Math. Soc.} \textbf{3}, 469-479.
\bibitem{McNeil1970}
McNeil, D.R., 1970. Integral functions of birth and death processes and related limiting 
distributions. \textit{Ann. Math. Stat.} \textbf{41}, 480-485.
	\bibitem{Meerschaert2011}
	Meerschaert, M., Nane, E., Vellaisamy, P., 2011. The fractional Poisson process and the inverse stable subordinator. {\it Electron. J. Probab.} \textbf{16}, 1600-1620.
	\bibitem{Orsingher2010}
	Orsingher, E., Polito, F., 2010. Fractional pure birth processes. \textit{Bernoulli}. \textbf{16}, 858-881.
	\bibitem{Orsingher2011}
	Orsingher, E., Polito, F., 2011. On a fractional linear birth-death process. \textit{Bernoulli}. \textbf{17}(1), 114–137.
	\bibitem{Orsingher2013}
	Orsingher, E., Polito, F., 2013. On the integral of fractional Poisson processes. \textit{Stat. Probab. Lett.} \textbf{83}, 1006-1017.
	\bibitem{Prendiville1949}
	Prendiville, B.J., 1949. Discussion of paper by Kendall. \textit{J. Roy. Statist. Soc. Ser. B} \textbf{11}, 273.
	\bibitem{Puri1966}
	Puri, P.S., 1966. On the homogeneous birth-and-death process and its integral. \textit{Biometrika}. \textbf{53} (1–2), 61–71.
	\bibitem{Prabhakar1971}
  Prabhakar, T.R., 1971. A singular integral equation with a generalized Mittag Leffler function in the kernel. \textit{Yokohama Math. J.} \textbf{19}, 7–15.
  \bibitem{Vishwakarma2024a}
  Vishwakarma, P., Kataria, K.K., 2024. On the generalized birth-death process and its linear versions. \textit{J. Theor. Probab.} \textbf{37}, 3540-3780.
  \bibitem{Vishwakarma2024b}
  Vishwakarma, P., Kataria, K.K., 2024. On integrals of birth-death processes at random time. \textit{Stat. Probab. Lett.} \textbf{214}, 110204.
\end{thebibliography}
\end{document}